\newcommand{\field}[1]{\mathbb{#1}}
\newcommand{\N}{\field{N}}
\newcommand{\Z}{\field{Z}}
\newcommand{\R}{\field{R}}
\newcommand{\C}{\field{C}}
\newcommand{\Q}{\field{Q}}
\renewcommand{\H}{\mathbb{H}}
\newcommand{\SL}{\operatorname{SL}}
\newcommand{\im}{\operatorname{Im}}
\newcommand{\re}{\operatorname{Re}}
\newcommand{\z}{\mathfrak{z}}
\numberwithin{equation}{section}
\newtheorem{theorem}{Theorem}
\numberwithin{theorem}{section}
\newtheorem{lemma}[theorem]{Lemma}
\newtheorem{prop}[theorem]{Proposition}
\newtheorem{corollary}[theorem]{Corollary}
\theoremstyle{remark}
\newtheorem*{remark}{Remark}
\theoremstyle{definition}
\newtheorem{definition}[theorem]{Definition}
\title{Niebur-Poincar\'e Series
 and Traces of Singular Moduli}
\author{Steffen L\"obrich}
\begin{document}

\maketitle

\begin{abstract}
We compute the Fourier coefficients of analogues of Kohnen and Zagier's modular forms $f_{k,\Delta}$ of weight $2$ and negative discriminant. These functions can also be written as twisted traces of certain weight $2$ Poincar\'e series with evaluations of Niebur-Poincar\'e series as Fourier coefficients. This allows us to study twisted traces of singular moduli in an integral weight setting. In particular, we recover explicit series expressions for twisted traces of singular moduli and extend algebraicity results by Bengoechea to the weight $2$ case. We also compute regularized inner products of these functions, which in the higher weight case have been related to evaluations of higher Green's functions at CM-points.
\end{abstract}
 
\section{Introduction}

For a positive discriminant $\Delta$ and an integer $k>1$, Zagier \cite{zamrq} introduced the weight $2k$ cusp forms (in a different normalization)
\begin{equation}\label{fkd}
f_{k,\Delta}(\tau):=\frac{\Delta^{k-\frac12}}{2\pi}\sum_{Q\in\mathscr{Q}_{\Delta}}Q(\tau, 1)^{-k},
\end{equation}
where $\mathscr{Q}_\Delta$ denotes the set of binary integral quadratic forms of discriminant $\Delta$. The functions $f_{k,\Delta}$ were extensively studied by Kohnen and Zagier and have several applications. For example, they used these functions to construct the kernel function for the Shimura and Shintani lifts and to prove the non-negativity of twisted central $L$-values \cite{kz2}. Furthermore, the even periods
$$
\int_0 ^\infty f_{k,\Delta}(it)t^{2n}dt,\quad (0\leq n\leq k-1)
$$
of the $f_{k,\Delta}$ are rational \cite{kz}. Bengoechea \cite{ben} introduced analogous functions for negative discriminants and showed that their Fourier coefficients are algebraic for small $k$. These functions are no longer holomorphic, but have poles at the CM-points of discriminant $\Delta$. They were realized as regularized theta lifts by Bringmann, Kane, and von Pippich \cite{bkcyc} and Zemel \cite{ze}. Moreover, Bringmann, Kane, and von Pippich related regularized inner products of the $f_{k,\Delta}$ to evaluations of higher Green's functions at CM-points. \\

The right-hand side of \eqref{fkd} does not converge for $k=1$. However, one can use Hecke's trick to obtain weight $2$ analogues of the $f_{k,\Delta}$. These were introduced by Zagier \cite{zamrq} and further studied by Kohnen \cite{ko}. The aim of this paper is to analyze these weight $2$ analogues for negative discriminants. Here we deal with generalizations $f^*_{d,D,N}$ for a level $N$, a discriminant $d$, and a fundamental discriminant $D$ of opposite sign (see Definition \ref{fdef}). The $f^*_{d,D,N}$ transform like modular forms of weight $2$ for $\Gamma_0(N)$ and have simple poles at the Heegner points of discriminant $dD$ and level $N$. Let $\mathscr{Q}_{dD, N}$ denote the set of quadratic forms $[a,b,c]$ of discriminant $dD$ with $a>0$ and $N|a$, $\chi_D$ the generalized genus character associated to $D$, and $H(d,D,N)$ the twisted Hurwitz class number of discriminants $d$, $D$ and level $N$ (see Subsection \ref{2.2} for precise definitions). Then we obtain the following Fourier expansion ($v:=\im(\tau)$ throughout).

\begin{theorem}\label{anafexp}
For $v>\frac{\sqrt{|dD|}}{2}$, we have 
$$
f^*_{d,D,N} (\tau)=-\frac{ 3 H(d,D,N)}{\pi \left[\SL_2(\Z):\Gamma_0(N)\right]v} - 2\sum_{n\geq 1}\sum_{a>0 \atop N|a }S_{d,D}(a,n)\sinh\left(\frac{\pi n\sqrt{|dD|}}{a}\right)e(n\tau),
$$
where $e(w):=e^{2\pi iw}$ for all $w\in\C$ and
$$
S_{d,D}(a,n):=\sum_{b\pmod{2a}\atop b^2\equiv dD\pmod{4a}}\chi_D\left(\left[a,b,\frac{b^2-dD}{4a}\right]\right)e\left(\frac{nb}{2a}\right).
$$
\end{theorem}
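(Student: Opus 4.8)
The plan is to compute the Fourier expansion directly from the Hecke-regularized definition of $f^*_{d,D,N}$ (Definition \ref{fdef}). For $\re(s)$ large the defining sum over $\mathscr{Q}_{dD,N}$ converges absolutely, so I would first fix such an $s$, carry out the computation term by term, and only let $s\to 0$ at the very end. Since the genus character $\chi_D$, the discriminant $dD$, and the divisibility $N\mid a$ are all invariant under the translation action $[a,b,c]\mapsto[a,b+2a,a+b+c]$ of $\Gamma_\infty=\langle\pm\left(\begin{smallmatrix}1&1\\0&1\end{smallmatrix}\right)\rangle$, I would organize the sum by grouping the forms first according to their leading coefficient $a>0$ with $N\mid a$ and then according to the residue $b_0$ of $b$ modulo $2a$. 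Writing $b=b_0+2am$ with $b_0^2\equiv dD\pmod{4a}$ and $m\in\Z$, each translation orbit is a sum over $m\in\Z$ of integer shifts of one fixed form, which unfolds the expansion into the desired double sum over $a$ and $n$.

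For fixed $a$ and $b_0$ the form is $Q(\tau,1)=a(\tau-z_Q)(\tau-\bar z_Q)$ with $z_Q=\frac{-b_0+i\sqrt{|dD|}}{2a}$, and the orbit sum becomes $\sum_{m\in\Z}\bigl(a(\tau+m-z_Q)(\tau+m-\bar z_Q)\bigr)^{-1}$. I would split this by partial fractions, using $z_Q-\bar z_Q=\tfrac{i\sqrt{|dD|}}{a}$, into a difference of two Lipschitz-type sums $\sum_m(\tau+m-z_Q)^{-1}$ and $\sum_m(\tau+m-\bar z_Q)^{-1}$. Applying the Lipschitz summation formula $\sum_m(\tau+m-w)^{-1}=-\pi i-2\pi i\sum_{n\geq1}e(n(\tau-w))$, valid once $\im(\tau-w)>0$, to each, the two geometric series recombine so that the $n$-th term carries the factor $e(-nz_Q)-e(-n\bar z_Q)=2\,e\!\left(\tfrac{nb_0}{2a}\right)\sinh\!\left(\tfrac{\pi n\sqrt{|dD|}}{a}\right)$; the hyperbolic sine is precisely the difference of the contributions at $z_Q$ and at $\bar z_Q$. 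Summing the character-weighted $e(nb_0/2a)$ over $b_0\pmod{2a}$ reproduces the twisted Salié sum $S_{d,D}(a,n)$, and the normalization $\frac{\sqrt{|dD|}}{2\pi}$ in the definition collapses the overall constant $\frac{1}{i\sqrt{|dD|}}\cdot(-2\pi i)\cdot 2$ to exactly $-2$. Convergence of the resulting double sum, together with the validity of the Lipschitz formula at the smallest admissible $a$, requires $\im(\tau-z_Q)=v-\frac{\sqrt{|dD|}}{2a}>0$, which holds as soon as $v>\frac{\sqrt{|dD|}}{2}$, the bound in the theorem.

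The main obstacle is the constant ($n=0$) Fourier mode, which is responsible for the non-holomorphic term $-\frac{3H(d,D,N)}{\pi[\SL_2(\Z):\Gamma_0(N)]v}$. At $s=0$ the $-\pi i$ constants coming from the two Lipschitz sums cancel term by term, so a naive evaluation would miss this contribution entirely; it is genuinely produced by the analytic continuation in $s$, completely analogously to the non-holomorphic correction $-\frac{3}{\pi v}$ acquired by the Hecke-regularized weight-$2$ Eisenstein series. I would therefore keep the regulator, isolate the $n=0$ mode of the $s$-continued sum, and evaluate its limit at $s=0$. The delicate part is the bookkeeping: one must show that this continued constant term is of the form $c/v$ and identify the coefficient $c$ with the twisted Hurwitz class number, which amounts to counting the $\Gamma_0(N)$-classes of forms of discriminant $dD$ weighted by $\chi_D$ via the class number formula, producing $H(d,D,N)$ and the index $[\SL_2(\Z):\Gamma_0(N)]$. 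Combining this constant term with the holomorphic coefficients from the previous step then yields the stated expansion.
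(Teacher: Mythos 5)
Your computational core is sound and matches the paper's: grouping $\mathscr{Q}_{dD,N}$ by $a$ and $b_0\pmod{2a}$, Fourier-expanding each translation orbit, and extracting the factor $e(-nz_Q)-e(-n\overline{z}_Q)=2e\left(\frac{nb_0}{2a}\right)\sinh\left(\frac{\pi n\sqrt{|dD|}}{a}\right)$ is exactly what the paper's Poisson-summation argument produces (Lipschitz summation \emph{is} Poisson summation here, and the paper gets the same $\sinh$ from the inverse Laplace transform of $s\mapsto\frac{\kappa}{s^2-\kappa^2}$). But there are two genuine gaps. First, your convergence claim is wrong: for fixed $n$, the inner sum $\sum_{a}S_{d,D}(a,n)\sinh\left(\frac{\pi n\sqrt{|dD|}}{a}\right)$ is \emph{not} absolutely convergent, since $\sinh\left(\frac{\pi n\sqrt{|dD|}}{a}\right)\sim\frac{\pi n\sqrt{|dD|}}{a}$ and $S_{d,D}(a,n)$ does not decay; the condition $v>\frac{\sqrt{|dD|}}{2}$ only controls the sum over $n$. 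The convergence of the $a$-sum, and the justification for interchanging it with the limit $s\to 0$, is the technical heart of the proof: the paper rewrites $S_{d,D}(a,n)$ in terms of half-integral weight Kloosterman sums via Proposition 3 of \cite{dit} and invokes the analytic continuation of the Selberg--Kloosterman zeta function $S_{m,n}(s)$ past $s=\frac32$ for $mn<0$ (\cite{fo}) to get a locally uniform bound for $\re(s)>-\varepsilon$. Without this input your double sum is not even known to converge. Second, your method is internally inconsistent as stated: the partial-fraction splitting of $Q(\tau,1)^{-1}$ and the Lipschitz formula only apply at $s=0$, because $\phi_s(w)=w^{-1}|w|^{-s}$ does not split by partial fractions for $s\neq 0$. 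You cannot both ``fix $s$ large and compute term by term'' and ``apply the Lipschitz summation formula''; keeping the regulator forces you into Poisson summation on $\phi_s$ itself, which is what the paper does.

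On the constant term you correctly diagnose that the non-holomorphic piece $-\frac{3H(d,D,N)}{\pi[\SL_2(\Z):\Gamma_0(N)]v}$ is invisible at $s=0$ and must come from the continuation, but your plan to extract it directly from the $n=0$ mode is left as a sketch of the ``delicate bookkeeping'' rather than a proof. The paper sidesteps this entirely: by Lemma \ref{ftrp} one has $f_{d,D,N,s}=-\operatorname{tr}_{d,D,N}(H_{N,s}(\cdot,\tau))$, so the constant term of $f^*_{d,D,N}$ is the trace of the constant term $\frac{3}{\pi[\SL_2(\Z):\Gamma_0(N)]v}$ of $H_N^*(z,\cdot)$ from Proposition \ref{pexp}, which immediately gives $-\frac{3H(d,D,N)}{\pi[\SL_2(\Z):\Gamma_0(N)]v}$. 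A direct evaluation along your lines is possible (it is Zagier's original Hecke-trick computation), but as written it is a statement of intent, not an argument.
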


\begin{remark}
The exponential sums $S_{d,D}$ also occur for example in \cite{dit} and \cite{ko}.
\end{remark}

Note that we obtain a non-holomorphic term in the Fourier expansion of $f^*_{d,D,N}$, just like in the case of the non-holomorphic weight $2$ Eisenstein series $E_2 ^*$ (see Subsection \ref{nota}). Therefore, in contrast to the higher weight case, the $f^*_{d,D,N}$ are in general no longer meromorphic modular forms, but {\it polar harmonic Maass forms}. This class of functions is defined and studied in Subsection \ref{polar}. \\

We also use a different approach to compute the coefficients of the $f^*_{d,D,N}$, writing them as traces of certain Poincar\'e series denoted by $H_{N}^*(z,\cdot)$ (see Proposition \ref{anacontin}). The $H_{N}^*(z,\cdot)$ are weight $2$ analogues of Petersson's Poincar\'e series and were introduced by Bringmann and Kane \cite{bk} to obtain an explicit version of the Riemann-Roch Theorem in weight $0$. We obtain the following different Fourier expansion of the $f^*_{d,D,N}$, realizing their coefficients as twisted traces of the Niebur-Poincar\'e series $j_{N,n}$ (see Definition \ref{tracedef} and Theorem \ref{niebth}).

\begin{theorem}\label{fexp}
For $v>\operatorname{max}\left\{\frac{\sqrt{|dD|}}{2}, 1\right\}$, we have 
$$
f^*_{d,D,N}(\tau)=-\frac{ 3 H(d,D,N)}{\pi \left[\SL_2(\Z):\Gamma_0(N)\right]v}-\sum_{n>0}\operatorname{tr}_{d,D,N} \left(j_{N, n}\right)e(n\tau).
$$
\end{theorem}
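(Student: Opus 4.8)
The plan is to derive this expansion not from Theorem~\ref{anafexp} but from the realization of $f^*_{d,D,N}$ as a trace of the weight $2$ Poincar\'e series $H_N^*$. By Proposition~\ref{anacontin}, for $\tau$ in the stated range one has
$$
f^*_{d,D,N}(\tau)=\operatorname{tr}_{d,D,N}\bigl(H_N^*(\,\cdot\,,\tau)\bigr),
$$
where the twisted trace of Definition~\ref{tracedef} is taken in the first variable, i.e.\ a finite $\chi_D$-weighted sum over the $\Gamma_0(N)$-classes of Heegner points $\tau_Q$, $Q\in\mathscr{Q}_{dD,N}$, of discriminant $dD$. Thus the task reduces to computing the Fourier expansion in $\tau$ of this finite sum, and the whole argument proceeds by inserting the known $\tau$-expansion of $H_N^*$ and then applying the trace.

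First I would record the Fourier expansion of $H_N^*(z,\tau)$ in the variable $\tau$, valid for $\tau$ above all the CM points. Its holomorphic coefficients are precisely the evaluations $j_{N,n}(z)$ of the Niebur-Poincar\'e series of Theorem~\ref{niebth}, and it carries a single non-holomorphic contribution proportional to $1/v$, coming from the weight $2$ Hecke-trick normalization exactly as for $E_2^*$; schematically
$$
H_N^*(z,\tau)=\frac{c_N}{v}-\sum_{n>0}j_{N,n}(z)\,e(n\tau).
$$
Since the trace is a \emph{finite} sum over classes of Heegner points, I may interchange it with the sum over $n$ term by term. The coefficient of $e(n\tau)$ then becomes $-\operatorname{tr}_{d,D,N}(j_{N,n})$, which produces the holomorphic part of the asserted expansion.

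It remains to identify the constant term, and this is where I expect the bookkeeping to be most delicate. Because the $1/v$ coefficient $c_N$ of $H_N^*(z,\tau)$ is independent of $z$, its trace equals $c_N\cdot\operatorname{tr}_{d,D,N}(1)$, and $\operatorname{tr}_{d,D,N}(1)$ is, up to the index $[\SL_2(\Z):\Gamma_0(N)]$, the twisted Hurwitz class number $H(d,D,N)$. Keeping track of this index factor, of the genus-character weights, and of the stabilizer orders hidden inside the trace is what reproduces the normalization $-\tfrac{3H(d,D,N)}{\pi[\SL_2(\Z):\Gamma_0(N)]v}$; matching these constants precisely is the main obstacle.

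Finally, the range $v>\max\{\sqrt{|dD|}/2,1\}$ is exactly what the argument needs: each Heegner point $\tau_Q=\frac{-b+\sqrt{dD}}{2a}$ has $\im(\tau_Q)=\sqrt{|dD|}/(2a)\le\sqrt{|dD|}/2$, so requiring $v>\sqrt{|dD|}/2$ keeps $\tau$ above all poles of $H_N^*(\tau_Q,\,\cdot\,)$ and validates the Fourier expansion, while $v>1$ secures convergence of the Niebur-Poincar\'e expansion; together these justify the termwise interchange of the finite trace with the Fourier sum. As a consistency check one may compare the resulting coefficients with Theorem~\ref{anafexp}, which forces $\operatorname{tr}_{d,D,N}(j_{N,n})=2\sum_{a>0,\,N\mid a}S_{d,D}(a,n)\sinh\!\bigl(\pi n\sqrt{|dD|}/a\bigr)$; this is exactly what one expects, since $2\pi n\,\im(\tau_Q)=\pi n\sqrt{|dD|}/a$ shows the $\sinh$ to arise from the weight $0$ Whittaker function of $j_{N,n}$ at $\im(\tau_Q)$, and $S_{d,D}$ from summing over $b\pmod{2a}$ in the Heegner sum.
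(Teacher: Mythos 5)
Your argument is exactly the paper's proof: Lemma \ref{ftrp} gives $f_{d,D,N,s}(\tau)=-\operatorname{tr}_{d,D,N}\bigl(H_{N,s}(\cdot,\tau)\bigr)$, and after analytic continuation to $s=0$ one inserts the expansion of Proposition \ref{pexp} and interchanges the finite trace with the Fourier sum. The only caveat is your sign bookkeeping: the trace relation carries the minus sign, while Proposition \ref{pexp} reads $H_N^*(z,\tau)=\frac{3}{\pi[\SL_2(\Z):\Gamma_0(N)]v}+\sum_{n>0}j_{N,n}(z)e(n\tau)$ with \emph{plus} signs, so both minus signs in the statement come from the trace relation and there is no delicate constant-matching left to do.
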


An interesting phenomenon occurs when $\Gamma_0(N)$ has genus $0$. Subgroups of this type and their Hauptmoduln play a fundamental role in Monstrous Moonshine (see for example \cite{cn} for a classical and \cite{df} for a more modern treatment). When we apply the suitably normalized $n$-th Hecke operator $T_n$ to the Hauptmodul $J_N$ for $\Gamma_0(N)$, then the Niebur-Poincar\'e series $j_{N, n}$ coincides with $T_n J_N$, up to an additive constant. Zagier \cite{za} showed that, for discriminants $d<0$ and $D>0$, the functions
$$
q^{d}+\sum_{D>0}\operatorname{tr}_{d,D,N}(T_n J_N)q^D \quad \text{and}\quad q^{-D}+B_n(D,0)+ \sum_{d>0}\operatorname{tr}_{d,D,N}(T_n J_N)q^d 
$$
are weakly holomorphic modular forms for $\Gamma_0(4N)$ of weight $\frac12$ resp.~$\frac32$ in the Kohnen plus-space. Now summing over $n$ instead of $D$ or $d$, Theorem \ref{fexp} states that the twisted Hecke traces $\{\operatorname{tr}_{d,D,N}(T_n J_N)\}_{n>0}$ give rise to Fourier coefficients of the meromorphic modular forms $f^*_{d,D,N}-\frac{H(d,D,N)}{\left[\SL_2(\Z):\Gamma_0(N)\right]}E_2^*$. \\

We give three applications of Theorem \ref{fexp}. First, comparing Theorems \ref{anafexp} and \ref{fexp}, we obtain explicit series expressions for traces of Niebur-Poincar\'e series. 

\begin{corollary}\label{fserexp}
We have  
$$
\operatorname{tr}_{d,D,N} \left(j_{N, n} \right) = 2\sum_{a>0 \atop N|a }S_{d,D}(a,n)\sinh\left(\frac{\pi n\sqrt{|dD|}}{a}\right).
$$
\end{corollary}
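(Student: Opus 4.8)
The plan is to read the corollary off directly from the two independent Fourier expansions of the single function $f^*_{d,D,N}$ provided by Theorems \ref{anafexp} and \ref{fexp}. Theorem \ref{anafexp} is valid for $v>\frac{\sqrt{|dD|}}{2}$ and Theorem \ref{fexp} for $v>\max\left\{\frac{\sqrt{|dD|}}{2},1\right\}$; on the nonempty common half-plane $v>\max\left\{\frac{\sqrt{|dD|}}{2},1\right\}$ both displays represent the same function, so it suffices to work there.

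First I would note that the non-holomorphic terms
$$
-\frac{3H(d,D,N)}{\pi \left[\SL_2(\Z):\Gamma_0(N)\right]v}
$$
occurring in the two expansions are literally identical, and hence cancel when one expansion is subtracted from the other. What survives is an identity between two honest holomorphic Fourier series in the variables $e(n\tau)$ with $n>0$, namely
$$
\sum_{n>0}\operatorname{tr}_{d,D,N}(j_{N,n})\,e(n\tau)=2\sum_{n\geq 1}\sum_{a>0,\,N|a}S_{d,D}(a,n)\sinh\left(\frac{\pi n\sqrt{|dD|}}{a}\right)e(n\tau).
$$

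Next I would invoke uniqueness of Fourier coefficients: a function holomorphic on $v>\max\left\{\frac{\sqrt{|dD|}}{2},1\right\}$ and invariant under $\tau\mapsto\tau+1$ admits a unique expansion in the $e(n\tau)$, so the coefficients of $e(n\tau)$ on the two sides must agree for every $n>0$. Equating the coefficient of $e(n\tau)$ then gives
$$
\operatorname{tr}_{d,D,N}(j_{N,n})=2\sum_{a>0,\,N|a}S_{d,D}(a,n)\sinh\left(\frac{\pi n\sqrt{|dD|}}{a}\right),
$$
which is precisely the asserted formula.

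I do not expect any genuine analytic obstacle here, since the corollary is a formal consequence of the two theorems. The only points deserving attention are that both expansions be simultaneously valid on a common domain and that the non-holomorphic pieces match exactly, so that the surviving identity is purely one of holomorphic $q$-expansions to which uniqueness of Fourier coefficients can be applied; both are immediate from the explicit shape of Theorems \ref{anafexp} and \ref{fexp}. Hence the corollary follows at once.
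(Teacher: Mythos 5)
Your proof is correct and is exactly the paper's argument: the paper derives Corollary \ref{fserexp} precisely by comparing the two Fourier expansions of $f^*_{d,D,N}$ from Theorems \ref{anafexp} and \ref{fexp} on their common domain of validity and equating coefficients. Nothing further is needed.
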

Corollary \ref{fserexp} was obtained by Duke for $N=D=1$ (\cite{du}, Proposition 4) and Jenkins for $N=1$ and $D>1$ (\cite{jen}, Theorems 1.5 and 2.2). Choi, Jeon, Kang, and Kim \cite{cjkk} obtained an analogous formula for the subgroups $\Gamma_0(p)^+$ for $p$ prime, later generalized by Kang and Kim \cite{kk} to $\Gamma_0(N)^+$ for arbitrary $N$. \\

Next we examine algebraicity properties of the Fourier coefficients of $f^*_{d,D,N}$. Bengoechea \cite{ben} showed that for $\Delta<0$ and $k\in\{2,3,4,5,7\}$ (so that $S_{2k}=\{0\}$), the Fourier coefficients of $f_{k,\Delta}$ lie in the Hilbert class field of $\Q(\sqrt{\Delta})$. We have the following extension to the weight $2$ case.

\begin{theorem}\label{inte}
If $\Gamma_0(N)$ has genus $0$, then the Fourier coefficients of the meromorphic part of $f^*_{d,D,N}$ are real algebraic integers in the field $\Q\left(\sqrt{D}\right)$.
\end{theorem}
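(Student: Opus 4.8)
The plan is to feed Theorem \ref{fexp} into the theory of complex multiplication. By that theorem the $n$-th Fourier coefficient of the meromorphic part of $f^*_{d,D,N}$ is $-\operatorname{tr}_{d,D,N}(j_{N,n})$, so it suffices to establish the three claimed properties for these twisted traces. Since $\Gamma_0(N)$ has genus $0$, I would invoke the fact recalled in the introduction that $j_{N,n}=T_nJ_N+c_n$ for the Hauptmodul $J_N$ and a constant $c_n$. When $D\neq 1$ the genus character $\chi_D$ is a \emph{nontrivial} character of the form class group, so the character sum $\sum_Q\chi_D(Q)$ vanishes and the additive constant $c_n$ drops out, giving $\operatorname{tr}_{d,D,N}(j_{N,n})=\operatorname{tr}_{d,D,N}(T_nJ_N)$; the degenerate case $D=1$ reduces to the known rationality of untwisted traces and can be treated separately. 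Normalizing $J_N$ to have rational integer $q$-expansion (possible in genus $0$), the function $T_nJ_N$ again has integral Fourier expansion.

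For integrality and algebraicity I would argue as follows. By the main theorem of complex multiplication, for each Heegner point $\tau_Q$ with $Q\in\mathscr{Q}_{dD,N}$ the singular modulus $J_N(\tau_Q)$ is an algebraic integer lying in the ring class field $H$ of the order $\mathcal{O}$ of discriminant $dD$ in $K:=\Q(\sqrt{dD})$; integrality follows because $J_N$ is integral over $\Z[j]$ while $j(\tau_Q)$ is an algebraic integer. Hence each $(T_nJ_N)(\tau_Q)$, being a $\Z$-linear combination of such values, is an algebraic integer, and since $\chi_D(Q)\in\{0,\pm 1\}$ and the stabilizer weights in $\operatorname{tr}_{d,D,N}$ equal $1$ for $dD\notin\{-3,-4\}$, the trace $\operatorname{tr}_{d,D,N}(T_nJ_N)$ is an algebraic integer.

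It remains to locate the trace in $\Q(\sqrt{D})$ and to prove reality. Under the Artin isomorphism $\operatorname{Gal}(H/K)\cong\operatorname{Cl}(\mathcal{O})$, Shimura reciprocity says the class group permutes the $J_N(\tau_Q)$, and $\chi_D$ corresponds to the genus character attached to the factorization $dD=d\cdot D$, which cuts out the quadratic subextension $L:=K(\sqrt{D})=\Q(\sqrt{d},\sqrt{D})$. Regrouping $\operatorname{tr}_{d,D,N}(T_nJ_N)=\sum_{\sigma\in\operatorname{Gal}(H/K)}\chi_D(\sigma)x^{\sigma}$ (with $x$ a base value) over the two cosets of $\operatorname{Gal}(H/L)=\ker\chi_D$ expresses it as a difference of relative traces $\operatorname{Tr}_{H/L}$, hence as an element of $L$. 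For reality I would use that $J_N$ has real Fourier coefficients, so $\overline{J_N(\tau_Q)}=J_N(-\overline{\tau_Q})=J_N(\tau_{Q'})$ with $Q'=[a,-b,c]$ the inverse class; since $\chi_D$ is a real genus character, $\chi_D(Q')=\chi_D(Q)$, so complex conjugation fixes the whole trace and it is real. (Alternatively, reality is visible directly from the manifestly real expansion in Theorem \ref{anafexp} via Corollary \ref{fserexp}.) As $d<0$, the maximal totally real subfield of the biquadratic field $L=\Q(\sqrt{d},\sqrt{D})$ is exactly $\Q(\sqrt{D})$, so a real element of $L$ lies in $\Q(\sqrt{D})$; combined with integrality, this yields the theorem.

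The main obstacle I anticipate is the genus-theoretic bookkeeping of the third step: verifying through Shimura reciprocity that, at level $N$ and for the possibly non-maximal order $\mathcal{O}$ of discriminant $dD$, the character $\chi_D$ is genuinely the genus character of the factorization $dD=d\cdot D$ and therefore corresponds to $\operatorname{Gal}(H/K(\sqrt{D}))$, so that the weighted sum collapses to a relative trace landing in $K(\sqrt{D})$. A secondary technical point is controlling integrality at the primes $2,3$ arising from the stabilizer weights and the exceptional discriminants $-3,-4$, which should be dispatched by direct inspection.
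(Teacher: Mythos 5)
Your reduction and overall strategy coincide with the paper's: it likewise passes through Theorem \ref{fexp} to identify the coefficients as $-\operatorname{tr}_{d,D,N}(j_{N,n})$, and then appeals to integrality of the Hauptmodul at Heegner points together with the genus-character Galois argument. The only difference is that the paper outsources exactly the two steps you sketch by hand: the integrality of $J_N(z_Q)$ (your assertion that $J_N$ is integral over $\Z[j]$) is Theorem I of \cite{cy}, and the fact that the twisted Heegner divisor is defined over $\Q(\sqrt{D})$ --- the Shimura-reciprocity bookkeeping you rightly flag as the main obstacle --- is Lemma 5.1(v) of \cite{bruo}, so neither needs to be reproved. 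Do note that these two asserted facts are genuine theorems rather than formalities, and that your final descent to $\Q(\sqrt{D})$ as written presupposes $d<0<D$ and that $d$ is not a perfect square (so that $\chi_D$ is a nontrivial character); the degenerate cases, like the weights at $dD\in\{-3,-4\}$, need the separate inspection you anticipate.
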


Eventually, we compute regularized inner products of meromorphic analogues of the $f^*_{d,D,N}$. For this we restrict to the case $D=N=1$ and consider the meromorphic modular forms
$$
f_{d}(\tau) := f^*_{d,1,1}(\tau) - H(d,1,1)E_2 ^*(\tau).
$$
The usual inner product $\langle f_{d},f_{\delta}\rangle$ for negative discriminants $d, \delta$ does not converge, so we need to use a regularization by Bringmann, Kane, and von Pippich. Moreover, since the $f_{d}$ do not decay like cusp forms towards $i\infty$, we also have to apply Borcherds's regularization near the cusp $i\infty$ (see Section \ref{4} for a precise definition). We obtain the following evaluations, where $J(z):=j_{1,1}(z)-24$ denotes the {\it normalized modular $j$-invariant}. 

\begin{theorem}\label{rip}
Let $d$ be a negative discriminant and $\mathscr{Q}_{d}:=\mathscr{Q}_{d,1}$. 
\begin{itemize}
\item[(i)] If $\delta < d$ is another negative discriminant such that $\frac{\delta}{d}$ is not a square, then
$$
\left\langle f_{d}, f_{\delta}\right\rangle =  \frac{1}{2\pi}\sum_{Q\in\mathscr{Q}_{d}/\SL_2(\Z)\atop \mathcal{Q}\in\mathscr{Q}_{\delta}/\SL_2(\Z)}\frac{1}{w_{Q}w_{\mathcal{Q}}}\log\left|J(z_Q)-J(z_\mathcal{Q})\right|.
$$
\item[(ii)] If neither $-\frac{d}{3}$ nor $-\frac{d}{4}$ is a square, then
$$
\left\langle f_{d}, f_{d}\right\rangle =  \frac{1}{2\pi}\sum_{Q\in\mathscr{Q}_{d}/\SL_2(\Z)}\log\left|\sqrt{|d|}\frac{J'(z_Q)}{Q(1,0)}\right|+ 
\frac{1}{2\pi}\sum_{Q,\mathcal{Q}\in\mathscr{Q}_{d}/\SL_2(\Z)\atop Q\neq \mathcal{Q}}\log\left|J(z_Q)-J(z_\mathcal{Q})\right|.
$$
\item[(iii)] We have
$$
\left\langle f_{-3}, f_{-3}\right\rangle =\frac{1}{18\pi}\log\left|\frac{\sqrt{3}}{2}J^{'''}\left(\frac{1+i\sqrt{3}}{2}\right)\right| \quad \text{and}\quad \left\langle f_{-4}, f_{-4}\right\rangle =\frac{1}{8\pi}\log\left|2J^{''}(i)\right|.
$$
\end{itemize}
\end{theorem}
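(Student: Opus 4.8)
The plan is to compute $\langle f_d, f_\delta\rangle$ by unfolding against the quadratic-form expansion of one of the two factors, turning the regularized surface integral over a fundamental domain $\mathcal{F}$ into a sum over $\SL_2(\Z)$-classes of forms of local integrals that evaluate the $s=1$ automorphic Green's function, and finally invoking the classical identification of this Green's function with $\log|J(z)-J(w)|$. The first step is to record that $f_d = f^*_{d,1,1} - H(d,1,1)E_2^*$ is an honest weight $2$ meromorphic modular form: subtracting $H(d,1,1)E_2^*$ cancels the non-holomorphic $1/v$-term of $f^*_{d,1,1}$ exhibited in Theorem \ref{anafexp}, leaving a form whose only singularities are simple poles at the CM-points $z_Q$, $Q\in\mathscr{Q}_d/\SL_2(\Z)$. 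Since $X(1)$ has genus $0$ and carries no nonzero holomorphic weight $2$ form, $f_d$ is determined by its principal parts, and I would identify it with the weighted logarithmic derivative $\sum_{Q}\frac{1}{w_Q}\frac{J'}{J-J(z_Q)}$ up to an explicit constant, where $J'=\frac{d}{d\tau}J$ is weight $2$ and $\frac{J'}{J-J(w)}$ has a simple pole of residue $1$ at $\tau=w$.

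Next I would unfold. Writing $f_\delta$ through its defining sum over $\mathscr{Q}_\delta$ and grouping the forms into $\SL_2(\Z)$-orbits with stabilizers $\Gamma_\mathcal{Q}$, the weight $2$ invariance of the integrand lets me replace $\int_\mathcal{F}$ by a sum over $\mathcal{Q}\in\mathscr{Q}_\delta/\SL_2(\Z)$ of integrals over $\Gamma_\mathcal{Q}\backslash\mathbb{H}$. By a Stokes/integration-by-parts computation each of these integrals equals the regularized value at $z_\mathcal{Q}$ of a primitive of $f_d$, i.e.\ a Green's-function evaluation; the classical fact that the $s=1$ Green's function on $\SL_2(\Z)\backslash\mathbb{H}$ is, up to elementary normalizing factors, $-\log|J(z)-J(w)|$ then produces the terms $\log|J(z_Q)-J(z_\mathcal{Q})|$ with weights $1/(w_Q w_\mathcal{Q})$.

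In part (i) I would use that the hypothesis that $\delta/d$ is not a square makes the orbits of $\mathscr{Q}_d$ and $\mathscr{Q}_\delta$ disjoint, so $z_Q\ne z_\mathcal{Q}$ always and $f_d$ is holomorphic at every $z_\mathcal{Q}$; all local integrals converge after Borcherds regularization at the cusp and assemble into the stated double sum. For part (ii) with $\delta=d$, the diagonal classes $Q=\mathcal{Q}$ place a pole of $f_d$ at the point where the kernel is singular, and here the Bringmann--Kane--von Pippich regularization is essential: it extracts the finite part of the $\log|z-w|$-singularity of the Green's function along the diagonal. Passing from the local coordinate $\tau-z_Q$ to the uniformizer $J-J(z_Q)=J'(z_Q)(\tau-z_Q)+O((\tau-z_Q)^2)$ introduces the factor $J'(z_Q)$, and tracking the normalization of $f_d$ (the residue proportional to $1/Q(1,0)$ and the scale $\sqrt{|d|}$) yields the self term $\log\bigl|\sqrt{|d|}J'(z_Q)/Q(1,0)\bigr|$, while the off-diagonal classes reproduce the $\log|J(z_Q)-J(z_\mathcal{Q})|$ sum. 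Part (iii) is the degenerate case of (ii) at an elliptic fixed point: for $d=-3$ the single class has $z_Q=\rho=\frac{1+i\sqrt3}{2}$ of order $w_Q=3$, for $d=-4$ it has $z_Q=i$ of order $w_Q=2$. There $J-J(z_Q)$ vanishes to order $w_Q$, so the correct local uniformizer is $(J-J(z_Q))^{1/w_Q}$, the derivative $J'(z_Q)$ of (ii) vanishes, and the first surviving derivative $J^{(w_Q)}(z_Q)$ enters instead; together with the diagonal weight $1/w_Q^2$ this gives the prefactors $\frac{1}{18\pi}$ and $\frac{1}{8\pi}$ and the evaluations in terms of $J'''(\rho)$ and $J''(i)$.

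The hard part will be the simultaneous bookkeeping of the two regularizations together with the diagonal and elliptic analyses. Controlling the Borcherds regularization at $i\infty$, where $f_d$ does not decay, so that its boundary contribution is absorbed correctly, and rigorously isolating the finite part of the Green's function along the diagonal in (ii) and (iii), are the delicate points: one must justify the change from the $\tau$-coordinate to the $J$-uniformizer and pin down every multiplicative constant (the factors $\sqrt{|d|}$, $Q(1,0)$, and the elliptic orders $w_Q$). The elliptic degeneration in (iii), where higher derivatives of $J$ appear with precisely the right combinatorial constants, is the most delicate and would require a careful local expansion at $\rho$ and $i$.
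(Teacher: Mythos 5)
Your proposal follows essentially the same route as the paper: the paper likewise decomposes $f_d=-\sum_{Q}w_Q^{-1}H_{z_Q}$ with $H_z=-\frac{1}{2\pi i}\frac{J'}{J-J(z)}$, applies Stokes's theorem against the $\xi_0$-primitive $-\frac{1}{2\pi}\log|J(\tau)-J(z)|$, checks that the Borcherds boundary term at $i\infty$ cancels against the boundary of the truncated fundamental domain, and extracts the constant term of the elliptic expansion at each CM point, with the diagonal and elliptic-fixed-point cases producing $J'(z_Q)$, $J''(i)$, and $J'''(\rho)$ exactly as you describe. The only organizational difference is that the paper keeps the integral on one fundamental domain and computes the pairwise products $\langle H_{z_Q},H_{z_{\mathcal Q}}\rangle$ by bilinearity rather than unfolding over $\Gamma_{\mathcal Q}\backslash\mathbb{H}$ (which would scatter the pole regularization over infinitely many translates of the poles), but this does not change the substance of the argument.
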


Note that $\log\left|J(z)-J(\z)\right|$ is a {\it Green's function} on the modular curve $X_0(1)$. The double traces over CM-values of Green's functions occurring in Theorem \ref{rip} have been related to heights of Heegner points on modular curves by Gross and Zagier \cite{gz}. Since the $f_{d}$ are modular forms of weight $2$, it would be enlightening to find a geometric interpretation of their inner products and see how they relate to height functions. In higher weight, Bringmann, Kane, and von Pippich \cite{bkcyc} wrote regularized inner products of the functions $f_{k,\Delta}$ for $\Delta<0$ in terms of double traces over CM-values of {\it higher Green's functions}, so we can see Theorem \ref{rip} as an extension of their result to the weight $2$ case. \\

The paper is organized as follows: In Section \ref{2}, we introduce the necessary notation and definitions. 
In Section \ref{3}, we prove Theorems \ref{anafexp}, \ref{fexp}, and \ref{inte}. Eventually, in Section \ref{4}, we compute the regularized inner products, proving Theorem \ref{rip}.

\section*{Acknowledgements}
We thank Kathrin Bringmann, Stephan Ehlen, and Markus Schwagenscheidt for valuable advice on writing the paper. We are also grateful to John Duncan, Michael Mertens, Ken Ono, Dillon Reihill, Larry Rolen, Tonghai Yang, Shaul Zemel, and the referee for helpful comments and conversations.

\section{Definitions and Preliminaries}\label{2}

\subsection{General notation}\label{nota}
Throughout this paper, 
we denote variables in the complex upper half-plane $\H$ by $\tau$, $z$, and $\varrho$ with $v:=\im(\tau), y:=\im (z), \eta:=\im(\varrho)$ and for $w\in\C$ we write $e(w):=e^{2\pi iw}$. For a matrix $M=\left(\begin{smallmatrix} a&b\\ c&d\end{smallmatrix}\right)\in\Gamma_0(N)$ and $\tau\in\H$, we set 
$$
M\tau := \frac{a\tau +b }{c\tau +d}\quad\text{ and }\quad j(M,\tau):=c\tau +d.
$$
For each point $\varrho\in\H$ we let $\Gamma_{N,\varrho}$ denote the stabilizer of $\varrho$ in $\Gamma_0(N)$ and set $w_{N,\varrho} := \frac{1}{2}\#\Gamma_{N,\varrho}$. Note that if $\rho:=\frac{1+i\sqrt{3}}{2}$ denotes the sixth order root of unity in $\H$, then we have
$$
w_\varrho:=w_{1,\varrho}=\begin{cases}
3, & \text{if $\varrho\in\SL_2(\Z)\rho$,} \\
2, & \text{if $\varrho\in\SL_2(\Z)i$,} \\
1, & \text{otherwise.}
\end{cases}
$$

 Furthermore, we define the divisor sum function $\sigma(m):=\sum_{d|m}d$ and the weight $2$ Eisenstein series
$$
E_2(\tau) := 1-24\sum_{m\ge 1}\sigma(m)e(m\tau),
$$ 
as well as its non-holomorphic completion
$$
E_2 ^* (\tau):= -\frac{3}{\pi v} + E_2(\tau),
$$
which transforms like a weight $2$ modular form for $\SL_2(\Z)$. In general, we will use a star to denote non-holomorphic modular forms (cf. Proposition \ref{anacontin} and Definition \ref{fdef}).

\subsection{Quadratic forms and traces of singular moduli}\label{2.2}

We denote an integral binary quadratic form $Q(X,Y)=aX^2+bXY+cY^2\in\Z[X,Y]$ by $Q=[a,b,c]$. The group $\SL _2(\Z)$ acts on the set of binary quadratic forms via 
\begin{equation}\label{act}
\left(Q\circ {\left(\begin{smallmatrix} \alpha&\beta\\ \gamma&\delta\end{smallmatrix}\right)} \right)(X,Y):=Q(\alpha X+\beta Y, \gamma X+\delta Y),
\end{equation}
leaving the discriminant $\Delta=b^2-4ac$ invariant. For a positive integer $N$ and a discriminant $\Delta$, we write $\mathscr{Q}_{\Delta,N}$ for the set of all binary integral quadratic forms $Q=[a,b,c]$ of discriminant $\Delta$ with $a>0$ and $N|a$. Then the group $\Gamma_0(N)$ acts on $\mathscr{Q}_{\Delta,N}$. For $\Delta<0$ and $Q\in\mathscr{Q}_{\Delta,N}$, we denote by $z_Q$ the {\it Heegner point of $Q$}, which is the unique zero of $Q(\tau,1)$ in $\H$. \\

For $\Delta <0$, we consider a splitting $\Delta=d\cdot D$ into a discriminant $d$ and a fundamental disriminant $D$ that are both congruent to squares modulo $4N$ (meaning that $d,D\equiv 0$ or $1\pmod{4}$ and $D$ is not a proper square multiple of an integer congruent to $0$ or $1\pmod{4}$) and denote by $\chi_D$ the {\it generalized genus character} corresponding to the decomposition $\Delta=d\cdot D$ as defined in \cite{gkz}.

\begin{definition}\label{tracedef}
For a $\Gamma_0(N)$-invariant function $g:\H\rightarrow\C$, we define the \emph{twisted trace of singular moduli of discriminants $d$ and $D$ of $g$} as
$$
\operatorname{tr}_{d,D,N} (g):= \sum_{Q\in\mathscr{Q}_{dD,N}/\Gamma_0(N)}\frac{\chi_D(Q)}{w_{N, Q}}g(z_Q),
$$
where $w_{N, Q}:=w_{N,z_Q}$. Moreover, we call
$$
H(d,D,N):=\operatorname{tr}_{d,D,N}(1)=\sum_{Q\in\mathscr{Q}_{dD,N}/\Gamma_0(N)}\frac{\chi_D(Q)}{w_{N, Q}} 
$$
the {\it Hurwitz class number of discriminants $d$ and $D$ and level $N$}. 
\end{definition}

\subsection{Polar Harmonic Maass Forms}\label{polar}

Now we define polar harmonic Maass forms and study their elliptic expansions, which we will need to compute the regularized inner products in Section \ref{4}. See \cite{bfor}, Section 13.3 for an introduction to polar harmonic Maass forms and their applications. 

\begin{definition}\label{PolarHMFDefn}
For $k\in\mathbb{Z}$, a {\it polar harmonic Maass form of weight $k$ for $\Gamma_0(N)$} is a continuous function $F\colon\mathbb{H}\to\mathbb{C}\cup\{\infty\}$ which is real-analytic outside a discrete set of points and satisfies the following conditions: 
\begin{itemize}
\item[i)] For every $M\in\Gamma_0(N)$ and $\tau\in\H$, we have 
$$
F(M\tau)=j(M,\tau)^k F(\tau).
$$
\item[ii)]
The function $F$ is annihilated by the \begin{it}weight $k$ hyperbolic Laplacian\end{it} 
$$
\Delta_{k}:= -v^2\left(\frac{\partial^2}{\partial u^2} + \frac{\partial^2}{\partial v^2}\right)+i kv \left(\frac{\partial}{\partial u} +i\frac{\partial}{\partial v}\right).
$$
\item[iii)] For every $z\in\mathbb{H}$, there exists an $n\in\mathbb{N}_0$ such that $(\tau-z)^nF(\tau)$ is bounded in some neighborhood of $z$.
\item[iv)] The function $F$ grows at most linearly exponentially at the cusps of $\Gamma_0(N)$.
\end{itemize} 
We denote by $\mathcal{H}_{k}(N)$ the space of weight $k$ polar harmonic Maass forms for $\Gamma_0(N)$.
\end{definition}

Polar harmonic Maass forms have {\it elliptic expansions} around every point $\varrho\in\H$, which converge if 
\begin{equation}\label{xdef}
X_\varrho(\tau) := \frac{\tau-\varrho}{\tau-\overline{\varrho}}
\end{equation}
is sufficiently small. These can be seen as counterparts to the more common $q$-series expansions at the cusps and also break into two pieces. 
 
 \begin{prop}[Proposition 2.2 of \cite{bk}, see also Subsection 2.3 of \cite{bkcyc}]\label{EllipticExpansion}
A  polar harmonic Maass form $F$ of weight $k\leq 0$ has an expansion around each point $\varrho\in\mathbb H$ of the form $F=F^+_{\varrho}+F^-_{\varrho}$,
where the {\it meromorphic part} $F^+_{\varrho}$ is given by
\begin{equation}\label{mp}
F^+_{\varrho}(\tau):=(\tau-\overline{\varrho})^{-k}\sum_{n\gg-\infty}A_{F,\varrho}^+(n)X_{\varrho}^n(\tau)
\end{equation}
and the {\it non-meromorphic part} $F^-_{\varrho}$ by
\begin{equation}\label{nmp}
F^-_{\varrho}(\tau):=(\tau-\overline{\varrho})^{-k}\sum_{n\ll\infty}A_{F,\varrho}^-(n)\beta_0\left(1-|X_{\varrho}(\tau)|^2;1-k,-n\right)X_{\varrho}^n(\tau)
.
\end{equation}
These expressions converge for $|X_{\varrho}(\tau)|\ll 1$.
Here, we have that
$$
\beta_0\left(w; a,b\right):=\beta\left(w; a,b\right)-\mathcal{C}_{a,b} \hspace{7mm} \text{ with }\hspace{7mm}
\mathcal{C}_{a,b}:=\sum_{\substack{0\leq j\leq a-1\\ j\neq -b}} \binom{a-1}{j}\frac{(-1)^j}{j+b},
$$
where the \emph{incomplete $\beta$-function} is defined by
$
\beta({w};a,b):=\int_0^{w} t^{a-1} (1-t)^{b-1} dt.
$
\end{prop}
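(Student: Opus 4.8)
The plan is to pass to the local coordinate $X=X_\varrho(\tau)$ of \eqref{xdef}, which maps $\H$ biholomorphically onto the unit disk with $\varrho\mapsto 0$, and to translate the harmonicity condition $\Delta_k F=0$ into an explicit partial differential equation in $X$ and $\overline{X}$. Writing $F(\tau)=(\tau-\overline{\varrho})^{-k}g(\tau)$, so that $g=(\tau-\overline{\varrho})^{k}F$ absorbs the factor appearing in \eqref{mp} and \eqref{nmp}, the first step is a change-of-variables computation. Starting from $\Delta_k=-4v^2\partial_\tau\partial_{\overline{\tau}}+2ikv\,\partial_{\overline{\tau}}$ and using the identities $\tau-\overline{\varrho}=\frac{2i\eta}{1-X}$ and $1-|X|^2=\frac{4\eta v}{|\tau-\overline{\varrho}|^2}$ (where $\eta=\im(\varrho)$), I expect $\Delta_k F=0$ to reduce to the clean equation
$$
(1-|X|^2)\frac{\partial^2 g}{\partial X\,\partial\overline{X}}=k\,\overline{X}\,\frac{\partial g}{\partial\overline{X}}.
$$
For $k=0$ this is just $g_{X\overline{X}}=0$, the weight-$0$ harmonicity, which serves as a sanity check.

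The second step is to solve this PDE in the Wirtinger calculus, treating $X$ and $\overline{X}$ as independent. Setting $h:=\partial_{\overline{X}}g$, the equation becomes the first-order linear ODE $(1-X\overline{X})\,\partial_X h=k\overline{X}h$ in $X$, whose solution is $h=\phi(\overline{X})(1-X\overline{X})^{-k}$ for an arbitrary function $\phi$ of $\overline{X}$. Integrating once more in $\overline{X}$ gives the general solution
$$
g=\psi(X)+\int\frac{\phi(\overline{X})}{(1-X\overline{X})^{k}}\,d\overline{X},
$$
where $\psi$ is holomorphic. The holomorphic piece $\psi(X)=\sum_n A^{+}_{F,\varrho}(n)X^n$ is exactly the meromorphic part $F^+_\varrho$ of \eqref{mp}. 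For the integral I would Laurent-expand $\phi$ and treat the building block $\int\overline{X}^{\,m}(1-X\overline{X})^{-k}\,d\overline{X}$: substituting $t=X\overline{X}$ reduces it to $X^{-(m+1)}\int_0^{|X|^2}t^m(1-t)^{-k}\,dt=X^{-(m+1)}\beta(|X|^2;m+1,1-k)$. Since $k\leq 0$ the factor $(1-t)^{-k}$ is a polynomial, so expanding it and integrating from $0$ yields, after setting $n=-(m+1)$, precisely a multiple of $\beta_0(1-|X|^2;1-k,-n)X^n$; the regularizing constant $\mathcal{C}_{1-k,-n}$ in the definition of $\beta_0$ appears exactly as the constant of integration, and any change of basepoint alters the result only by holomorphic $X^n$-terms, which are absorbed into $\psi$. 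This reproduces the non-meromorphic part $F^-_\varrho$ of \eqref{nmp}.

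Finally, the ranges of summation follow from condition (iii) of Definition \ref{PolarHMFDefn}. Expanding $\beta_0(1-|X|^2;1-k,-n)X^n$ explicitly shows it to be a finite combination of monomials $X^{j}\overline{X}^{\,j-n}$ with $0\leq j\leq -k$ (plus a $\log|X|^2$ term when $0\le n\le -k$), so near $X=0$ its most singular term grows like $|X|^{-n}$; the finite pole order then forces the non-meromorphic coefficients to satisfy $n\ll\infty$, while imposing no lower bound. Dually, the meromorphic monomials $X^n$ are singular at $X=0$ only for $n<0$, so the same finite pole order forces $n\gg-\infty$ with no upper bound, and the resulting series converge on a punctured neighborhood $0<|X_\varrho(\tau)|\ll 1$. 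I expect the change-of-variables computation of the first step to be the main obstacle: reducing $\Delta_k$ to the displayed PDE requires re-expressing $v$, $\tau-\overline{\varrho}$, and the derivatives $\partial_\tau X$, $\partial_{\overline{\tau}}\overline{X}$ through $X$, $\overline{X}$, and $1-|X|^2$, and checking that every inhomogeneous term cancels. Once the PDE is in hand, the integration and the identification with $\beta_0$ are essentially bookkeeping, the only delicate point being the exceptional modes $0\le n\le -k$, where the logarithmic term appears.
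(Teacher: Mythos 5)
The paper does not prove this proposition itself --- it is quoted from Proposition 2.2 of \cite{bk} --- and your argument is correct and follows essentially the same route as that reference: the change of variables does reduce $\Delta_k F=0$ to $(1-|X|^2)\,g_{X\overline{X}}=k\overline{X}\,g_{\overline{X}}$, and the antiderivative identity $\partial_{\overline{X}}\bigl[\beta_0\bigl(1-X\overline{X};1-k,-n\bigr)X^{n}\bigr]=-\overline{X}^{-n-1}\bigl(1-X\overline{X}\bigr)^{-k}$ identifies the radial solutions with the stated building blocks, with the basepoint ambiguity correctly absorbed into the holomorphic part. The one step to tighten is the deduction of the summation ranges and convergence: rather than arguing from the "most singular term," isolate each angular frequency by integrating $F$ against $e(-n\theta)$ over circles $|X_\varrho(\tau)|=R$; condition (iii) of Definition \ref{PolarHMFDefn} then bounds each radial profile separately, and since the exponents $n$ and $2j-n$ ($0\le j\le -k$) occurring at frequency $n$ are distinct for $|n|$ large, this forces $A^+_{F,\varrho}(n)=0$ for $n\ll 0$ and $A^-_{F,\varrho}(n)=0$ for $n\gg 0$, while the same circle integrals give the Cauchy-type coefficient bounds needed for convergence on $0<|X_\varrho(\tau)|\ll 1$.
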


\noindent We refer to the terms 
in \eqref{mp} and \eqref{nmp} 
which grow as $\tau\to \varrho$ as the \begin{it}principal part of $F$ around $\varrho$\end{it}. 

\begin{remark}
The hyperbolic Laplacian splits as
\begin{equation}\label{xidef}
\Delta_k=-\xi_{2-k}\circ \xi_k, \quad\text{where}\quad \xi_{k}:=2iv^{k} \overline{\frac{\partial}{\partial \overline{\tau}}}.
\end{equation}
 If $F$ satisfies weight $k$ modularity, then $\xi_{k}(F)$ is modular of weight $2-k$. Moreover, $\xi_k$ annihilates the meromorphic part of a polar harmonic Maass form, so that it maps weight $k$ polar harmonic Maass forms to weight $2-k$ meromorphic modular forms and its kernel is given by the space of weight $k$ meromorphic modular forms.  
\end{remark}

\subsection{Niebur-Poincar\'e series}

Here we introduce {\it Niebur-Poincar\'e series} and give their explicit Fourier expansion.
Following \cite{nie}, we define for $n > 0$ 
$$
F_{N, -n, s}(z):=2\pi \sqrt{n}\sum_{M\in\Gamma_\infty\backslash\Gamma_0(N)} e(-n\re (Mz))\im (Mz)^{\frac12}I_{s-\frac12}(2\pi n\im (Mz)),
$$
where $I_{s-\frac12}$ denotes the {\it $I$-Bessel function} and $\Gamma_\infty:=\left\{\pm\left(\begin{smallmatrix}
	1 & n\\
	0 & 1
\end{smallmatrix}\right):n\in\Z\right\}$.
This series converges absolutely and locally uniformly for $\re (s) >1$. The function $F_{N,-n,s}$ is a $\Gamma_0(N)$-invariant eigenfunction of the hyperbolic Laplacian with eigenvalue $s(1-s)$. Niebur showed that $F_{N,-n,s}$ is analytic in $s$ and has an analytic continuation to $s=1$.

\begin{prop}[Theorem 1 of \cite{nie}]\label{niebth}
	The function $F_{N, -n, s}$ has an analytic continuation $j_{N, n}$ to $s=1$, and $j_{N, n}\in \mathcal H_0 (N)$. It has a Fourier expansion of the form
$$
		j_{N, n}(z)=e(-nz)-e(-n\overline{z})+c_N(n,0)+\sum_{m\ge1}\left(c_N(n,m)e(mz)+c_N(n,-m)e(-m\overline{z})\right).
$$
The coefficients are given by
$$
c_N(n,m):= 2\pi \sqrt{n}\sum_{\substack{c\geq 1 \\ N|c}}\frac{K(m,-n;c)}{c}\times\begin{cases}\frac{1}{\sqrt{m}} I_1\left(\frac{4\pi\sqrt{mn}}{c}\right), & \text{if $m> 0$,}\\
\frac{2\pi \sqrt{n}}{c},& \text{if $m= 0$,}\\
\frac{1}{\sqrt{|m|}} J_1\left(\frac{4\pi\sqrt{|m|n}}{c}\right), & \text{if $m< 0$,}
\end{cases}
$$
where 
$$
	K(m,n;c):=\sum_{\substack{a, d\pmod c \\ ad\equiv 1\pmod c }} e\left(\frac{m d +na}{ c}\right)
$$
denotes the \emph{Kloosterman sum} and $I_1$, $J_1$ are the \emph{first order $I$- and $J$-Bessel functions}, respectively.
\end{prop}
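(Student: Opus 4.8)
The approach I would take follows the classical computation of the Fourier coefficients of a Poincar\'e series by unfolding, followed by an analytic continuation of the resulting Fourier series. Since the text already records that $F_{N,-n,s}$ converges absolutely and locally uniformly for $\re(s)>1$ and is a $\Gamma_0(N)$-invariant eigenfunction of $\Delta_0$ with eigenvalue $s(1-s)$, I would take these as given and concentrate on three things: the Fourier expansion for $\re(s)>1$, its continuation to $s=1$, and the verification that the limiting function lies in $\mathcal{H}_0(N)$.

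First I would compute the Fourier expansion for $\re(s)>1$. Writing $\varphi_s(z):=e(-n\re(z))\im(z)^{\frac12}I_{s-\frac12}(2\pi n\im(z))$ for the seed, I split the sum over $\Gamma_\infty\backslash\Gamma_0(N)$ according to the lower-left entry $c$ of a representative $M=\left(\begin{smallmatrix} a & b \\ c & d\end{smallmatrix}\right)$. The identity coset ($c=0$) contributes $2\pi\sqrt{n}\,\varphi_s(z)$, which is already $1$-periodic in $x$. For $c\neq 0$ I would group the representatives by $c>0$ with $N\mid c$ and by $d\pmod c$ (with $a$ determined modulo $c$ through $ad\equiv 1$), the residual translations $z\mapsto z+\ell$ extending the $x$-integral to all of $\R$ and producing, via the identity $Mz=\tfrac{a}{c}-\tfrac{1}{c(cz+d)}$, the integral
$$
\int_{-\infty}^{\infty} \varphi_s\!\left(\frac{a}{c}-\frac{1}{c(cz+d)}\right) e(-mx)\,dx.
$$
Collecting the phase $e(-na/c)$ together with the phase $e(md/c)$ arising from the shift $x\mapsto x+d/c$, summed over $d\pmod c$, produces exactly the Kloosterman sum $K(m,-n;c)$, while the remaining integral is a classical Bessel transform of $I_{s-\frac12}$. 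This yields a Fourier expansion whose coefficients are holomorphic in $s$ for $\re(s)>1$.

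The main work, and the principal obstacle, is the analytic continuation to $s=1$. Rather than invoking the spectral theory of the resolvent (Niebur's route), I would argue directly on the Fourier side: estimate the coefficients uniformly for $s$ near $1$ using the Weil bound $|K(m,-n;c)|\ll_\varepsilon \gcd(m,n,c)^{1/2}\,c^{1/2+\varepsilon}$ together with the decay of the Bessel factor in $c$ (so that, for instance, the $c$-series defining each $m>0$ coefficient behaves like $\sum_c c^{-3/2+\varepsilon}$), and thereby show that the resulting Fourier series converges absolutely and locally uniformly in a neighborhood of $s=1$. Uniqueness of analytic continuation then identifies this series with $j_{N,n}$. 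Specializing $s=1$, the seed collapses via $I_{1/2}(x)=\sqrt{2/(\pi x)}\sinh(x)$ to $2\,e(-nx)\sinh(2\pi n y)=e(-nz)-e(-n\overline{z})$, giving the stated principal part, while the coefficient integrals evaluate to the first-order Bessel functions $I_1$ (for $m>0$) and $J_1$ (for $m<0$), with the middle case producing the constant term, matching the claimed formula for $c_N(n,m)$.

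Finally I would verify $j_{N,n}\in\mathcal{H}_0(N)$. Weight $0$ modularity for $\Gamma_0(N)$ is inherited from the coset sum, and since the locally uniform convergence of the continued series permits term-by-term application of the differential operator $\Delta_0$, the eigenvalue equation passes to the limit: at $s=1$ the eigenvalue $s(1-s)$ vanishes, so $\Delta_0 j_{N,n}=0$ and $j_{N,n}$ is harmonic (hence real-analytic on all of $\H$ by elliptic regularity, with no poles). The remaining conditions of Definition \ref{PolarHMFDefn} are then immediate: condition (iii) holds trivially since the function is finite at every point, and the explicit principal part $e(-nz)$ at the cusp $\infty$ exhibits at most linear exponential growth at the cusps, giving condition (iv).
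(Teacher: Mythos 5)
The paper does not prove this proposition; it is imported verbatim as Theorem 1 of Niebur's paper \cite{nie}, so there is no internal proof to compare against. Your sketch reconstructs essentially the classical argument that Niebur himself uses: unfold over $\Gamma_\infty\backslash\Gamma_0(N)$ to get a Fourier expansion for $\re(s)>1$ with Kloosterman sums and a Bessel transform of $I_{s-\frac12}$ in the coefficients, then continue past $\re(s)=1$ via the Weil bound (which turns the $c$-sum into something like $\sum_c c^{-2s+\frac12+\varepsilon}$, convergent for $\re(s)>\frac34$), and finally specialize $s=1$ using $I_{1/2}(x)=\sqrt{2/(\pi x)}\sinh(x)$; your identification of the seed term with $e(-nz)-e(-n\overline{z})$ and of the limiting Bessel kernels with $I_1$, $J_1$ is correct. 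Two points deserve tightening. First, local uniform convergence does not by itself license term-by-term application of $\Delta_0$; the clean argument is that at $s=1$ each Fourier term ($e(-nz)$, $-e(-n\overline z)$, constants, $e(mz)$, $e(-m\overline z)$) is individually harmonic, and since $\Delta_0=-v^2(\partial_u^2+\partial_v^2)$ a locally uniform limit of harmonic functions is harmonic (equivalently, argue distributionally from $\Delta_0F_{N,-n,s}=s(1-s)F_{N,-n,s}$ and invoke elliptic regularity, as you hint). Second, for $N>1$ condition (iv) of Definition \ref{PolarHMFDefn} must be checked at all cusps of $\Gamma_0(N)$, not only at $i\infty$; this follows by the same unfolding applied to the expansion at each cusp, where the principal part is absent, but it should be said. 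Neither point is a fatal gap.
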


The constants $c_N(n,0)$ can be explicitly evaluated. For example, for $N=1$ we obtain
$$
c_{1} (n,0) = 24\sigma(n)
$$
and for $p$ prime we have
\begin{equation}
c_{p}(n,0)=-\frac{24}{p^2 -1}\left(\sigma(n)-p^2 \sigma\left(\frac{n}{p}\right)\right),
\label{cm0eq}
\end{equation}
where $\sigma(\ell):=0$ if $\ell\notin\Z$ (see \cite{cjkk} for a similar calculation).

\subsection{Petersson's Poincar\'e series}

For $w,s\in\C$, $w\neq 0$, we let
\begin{equation}
\phi_s(w):=w^{-1}|w|^{-s}
\label{phi}
\end{equation}
and define
\begin{multline}\label{pdefeq}
H_{N,s}(z,\tau):=-\frac{v^s}{2\pi}\sum_{M\in\Gamma_0(N)}\phi_s\left(j(M, \tau)^2\frac{(M\tau- z)(M\tau -\overline{z})}{y}\right)\\
=-\frac{v^s}{2\pi}\sum_{M\in\Gamma_0(N)}\phi_s\left(\frac{(\tau- Mz)(\tau -M\overline{z})}{\im (Mz)}\right).
\end{multline}

These sums converge locally uniformly for $\re(s)>0$ and define analytic functions in $s$. They satisfy modularity of weight $0$ in $z$ and of weight $2$ in $\tau$. Bringmann and Kane showed that they have an analytic continuation $H^*_{N}$ to $s=0$, which are a polar harmonic Maass forms of weight $2$ with simple poles at $\Gamma_0(N)$-equivalent points to $z$. For this, they used a splitting of the sum due to Petersson and obtained an analytic continuation of the Fourier expansion of every part by Poisson summation and locally uniform estimates. 

\begin{prop}[Lemma 4.4 of \cite{bk}]\label{anacontin}
The function $H_{N,s}$ has an analytic continuation $H^*_{N}$ to $s=0$. We have 
$$
z\mapsto H_{N}^*(z,\tau)\in \mathcal{H}_0(N) \quad \text{and}\quad \tau\mapsto H_{N}^*(z,\tau)\in \mathcal{H}_2(N).
$$
 Furthermore, the function
$$
\tau\mapsto H_{N}(z,\tau):= H_{N}^*(z,\tau) + \frac{1}{[\SL_2(\Z):\Gamma_0(N)]}E_2^*(\tau)
$$
is a meromorphic modular form of weight $2$ for $\Gamma_0(N)$ with only simple poles at points that are $\Gamma_0(N)$-equivalent to $z$.
\end{prop}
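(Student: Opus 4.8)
The plan is to first establish everything on the half-plane $\re(s)>0$, where the defining series converges, and then push all conclusions to the critical point $s=0$ by Petersson's method. For $\re(s)>0$ absolute and locally uniform convergence away from the poles follows from
$$
\left|\phi_s\!\left(j(M,\tau)^2\tfrac{(M\tau-z)(M\tau-\overline z)}{y}\right)\right|=y^{1+\re(s)}\,|j(M,\tau)|^{-2-2\re(s)}\,|M\tau-z|^{-1-\re(s)}\,|M\tau-\overline z|^{-1-\re(s)},
$$
which, using $|M\tau-\overline z|\ge y$ and $|M\tau-z|\ge y$ away from the orbit of $z$, is dominated by $\sum_{M}|j(M,\tau)|^{-2-2\re(s)}$, a convergent Eisenstein-type series for $\re(s)>0$; the extra decay supplied by the factor $|w|^{-s}$ in $\phi_s$ is exactly what repairs the failure of convergence at the harmonic weight $k=2$. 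Reindexing $M\mapsto M\gamma$ together with the cocycle relation for $j$ then yields weight $0$ modularity in $z$ and weight $2$ modularity in $\tau$. At this stage I would record the factorization
$$
j(M,\tau)\,(M\tau-z)=(a-cz)\,(\tau-M^{-1}z),\qquad M=\left(\begin{smallmatrix}a&b\\c&d\end{smallmatrix}\right),
$$
so that the only zeros of the denominators in $\H$ occur at $\tau=M^{-1}z$, i.e.\ precisely at the points $\Gamma_0(N)$-equivalent to $z$; since $\phi_0(w)=w^{-1}$, these will become \emph{simple} poles in the limit.

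The heart of the argument, and the main obstacle, is the analytic continuation to $s=0$. Following Petersson, I would organize the sum by the bottom rows $(c,d)$ of $M$ (equivalently by $\Gamma_\infty$-cosets), treating the $c=0$ contribution separately from the $c\neq0$ part. Poisson summation in each piece produces a Fourier expansion of $\tau\mapsto H_{N,s}(z,\tau)$ whose coefficients are explicit $s$-dependent integrals (Bessel/Whittaker transforms for the $c\neq0$ terms and an elementary integral for $c=0$), and locally uniform estimates on these integrals show that every Fourier coefficient extends analytically to $s=0$. The delicate point is the constant ($m=0$) coefficient: this is where the weight $2$ obstruction concentrates, and its continuation to $s=0$ contributes a single non-holomorphic term proportional to $1/v$, in complete analogy with the passage from $E_2$ to $E_2^*$. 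Carrying out this computation defines $H^*_N$ and exhibits its Fourier expansion, and in particular shows that the non-meromorphic part of $\tau\mapsto H^*_N(z,\tau)$ consists solely of a constant multiple $c/v$ of $1/v$.

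With $H^*_N$ in hand I would verify the four conditions for $\tau\mapsto H^*_N(z,\tau)\in\mathcal H_2(N)$ (and, by the symmetric analysis in $z$, membership in $\mathcal H_0(N)$). Modularity passes to the limit by uniform convergence. For harmonicity, a direct computation shows that for $\re(s)>0$ each summand is a $\Delta_2$-eigenfunction in $\tau$ with eigenvalue $\lambda(s)$ satisfying $\lambda(0)=0$; applying $\Delta_2$ termwise and continuing gives $\Delta_2H^*_N=0$, and likewise $\Delta_0$ annihilates it in $z$. The pole condition follows from the factorization above: near $\tau=M^{-1}z$ only the corresponding single term is singular, and $\phi_0(w)=w^{-1}$ forces a simple pole, so $(\tau-M^{-1}z)H^*_N$ is bounded there. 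Finally, the Fourier expansion from the previous step gives at most linear exponential growth at the cusps, so condition iv) holds.

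It remains to prove the completion statement. Since $\xi_2$ annihilates the meromorphic part of a weight $2$ polar harmonic Maass form and $\xi_2(1/v)=-1$, we obtain $\xi_2\big(H^*_N(z,\tau)\big)=-c$, a constant, while $\xi_2(E_2^*)=\tfrac3\pi$. Computing $c$ from the $m=0$ Fourier coefficient found above, where the index $[\SL_2(\Z):\Gamma_0(N)]$ enters through the count of the relevant cosets and cusps, I expect $c=\tfrac{3}{\pi[\SL_2(\Z):\Gamma_0(N)]}$, so that
$$
\xi_2\!\left(H^*_N+\tfrac{1}{[\SL_2(\Z):\Gamma_0(N)]}E_2^*\right)=-\tfrac{3}{\pi[\SL_2(\Z):\Gamma_0(N)]}+\tfrac{1}{[\SL_2(\Z):\Gamma_0(N)]}\cdot\tfrac3\pi=0.
$$
Hence $\tau\mapsto H_N(z,\tau)$ lies in the kernel of $\xi_2$, i.e.\ is a meromorphic modular form of weight $2$ for $\Gamma_0(N)$; adding the smooth form $E_2^*$ does not alter the singularities, which therefore remain simple and supported exactly on the $\Gamma_0(N)$-orbit of $z$, as claimed.
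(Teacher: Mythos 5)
The paper does not actually prove this proposition: it is imported as Lemma 4.4 of \cite{bk}, and the only indication of method is the sentence preceding the statement (Petersson's splitting of the sum, Poisson summation, and locally uniform estimates). Your outline follows exactly that strategy, and your closing argument --- computing $\xi_2$ of the constant Fourier term $c/v$ and matching it against $\xi_2(E_2^*)=\frac{3}{\pi}$ to conclude that $H_N^*+\frac{1}{[\SL_2(\Z):\Gamma_0(N)]}E_2^*$ lies in the kernel of $\xi_2$ --- is consistent with the expansion recorded in Proposition \ref{pexp}, where the constant term is indeed $\frac{3}{\pi[\SL_2(\Z):\Gamma_0(N)]v}$.

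Two points need attention. First, your convergence bound is wrong as stated: $j(M,\tau)=c\tau+d$ depends only on the bottom row of $M$, so each value $|c\tau+d|^{-2-2\re(s)}$ is repeated once for every element of a full left coset $\Gamma_\infty M$, and the proposed majorant $\sum_{M\in\Gamma_0(N)}|j(M,\tau)|^{-2-2\re(s)}$ diverges for every $s$. You cannot discard the factors $|M\tau-z|^{-1-\re(s)}|M\tau-\overline z|^{-1-\re(s)}$ by bounding them above by constants; they are exactly what makes the inner sum over a coset converge (there $M\tau$ runs through $M_0\tau+\Z$, so these factors decay like $|n|^{-2-2\re(s)}$), after which the outer sum over cosets is the convergent Eisenstein-type series. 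Second, the analytic continuation to $s=0$ --- the actual content of the lemma --- is described but not carried out: the Poisson-summation identities, the identification of the $m=0$ coefficient as $c/v$ with $c=\frac{3}{\pi[\SL_2(\Z):\Gamma_0(N)]}$, and the locally uniform estimates that justify termwise continuation (and the interchange of $\Delta_2$ with the continuation) are all asserted rather than proved. Since the paper itself defers these computations to \cite{bk}, your sketch is pitched at the level of detail the paper provides, but as a standalone proof it has the concrete flaw above and leaves its central step unexecuted.
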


\begin{remark}
Note that $H_{N}^*(z,\cdot)$ has principal part $-\frac{w_{N,z}}{2\pi i }\frac{1}{\tau-z}$ at $\tau=z$.
\end{remark}

The Fourier coefficients of $H^*_{N}(z,\cdot)$ were computed in \cite{bklor}, where it was shown that they are given by the Niebur-Poincar\'e series $j_{N, n}$ of Proposition \ref{niebth}, evaluated at $z$.

\begin{prop}[Theorem 1.1 of \cite{bklor}]\label{pexp}
For $v>\operatorname{max}\left\{y, \frac{1}{y}\right\}$, we have 
$$
H_{N}^*(z,\tau)=\frac{3}{\pi [\SL_2(\Z):\Gamma_0(N)] v}+\sum_{n>0}j_{N, n}(z)e(n\tau).
$$
\end{prop}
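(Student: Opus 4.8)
The plan is to compute the Fourier expansion of $H_{N,s}(z,\tau)$ in the variable $\tau$ for $\re(s)>0$ and then pass to the analytic continuation at $s=0$ supplied by Proposition \ref{anacontin}, matching the resulting coefficients against the Niebur-Poincar\'e series of Proposition \ref{niebth}. The starting observation is that in the second shape of \eqref{pdefeq} left multiplication of $M$ by $T:=\left(\begin{smallmatrix}1&1\\0&1\end{smallmatrix}\right)\in\Gamma_\infty$ sends $Mz\mapsto Mz+1$ while fixing $\im(Mz)$, so the summand is invariant under $\tau\mapsto\tau+1$ and the full sum is $1$-periodic in $\tau$. I would therefore write $\Gamma_0(N)=\bigsqcup_{M\in\Gamma_\infty\backslash\Gamma_0(N)}\Gamma_\infty M$ and, separating each class into its integer translates, express $H_{N,s}(z,\tau)$ as a sum over $\Gamma_\infty\backslash\Gamma_0(N)$ of the periodization $\sum_{\ell\in\Z}\phi_s\bigl(\frac{(\tau-Mz-\ell)(\tau-M\overline{z}-\ell)}{\im(Mz)}\bigr)$.

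Next I would apply Poisson summation to each periodization. Writing $\tau=u+iv$ and $w:=Mz=\alpha+i\beta$ and substituting $x=u-\alpha$, the $m$-th Fourier coefficient in $u$ becomes the transform
$$
\mathcal{I}_m(v,w,s):=e(-m\alpha)\int_{-\infty}^{\infty}\phi_s\!\left(\frac{x^2-v^2+\beta^2+2ivx}{\beta}\right)e(-mx)\,dx.
$$
Evaluating $\mathcal{I}_m$ is the technical heart of the argument: the denominator factors as $(x+i(v-\beta))(x+i(v+\beta))$, whose zeros lie in a common half-plane precisely when $v>\beta$, and a standard Bessel integral (at $s=0$ simply a residue computation, closing the contour in the pole-free half-plane) shows that for $m>0$ the integral equals an explicit constant times $e(-m\alpha)\,\beta^{1/2}I_{1/2}(2\pi m\beta)\,e^{-2\pi mv}$, the factor $e^{-2\pi mv}$ being exactly what combines with $e(mu)$ to build $e(m\tau)$. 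The terms with $m<0$ vanish for $v>\max\{y,1/y\}$ since the contour can be closed in the half-plane free of poles, while $m=0$ produces the constant term together with the non-holomorphic $\frac{3}{\pi[\SL_2(\Z):\Gamma_0(N)]v}$ contribution that emerges from the regularization at $s=0$, in analogy with $E_2^*$.

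Finally I would identify the coefficient with $j_{N,m}(z)$. Because the Bessel index produced by $\mathcal{I}_m$ becomes $I_{1/2}$ at $s=0$, summing over $w=Mz$ reconstitutes exactly the defining seed $e(-m\re(Mz))\im(Mz)^{1/2}I_{s-\frac12}(2\pi m\im(Mz))$ of a Niebur-Poincar\'e series whose spectral parameter tends to $1$ as $s\to0$; by Niebur's continuation (Proposition \ref{niebth}) its value at that edge is $j_{N,m}(z)$. Concretely, the identity coset $M=I$ contributes the seed at $w=z$, which via $I_{1/2}(t)=\sqrt{2/(\pi t)}\sinh t$ collapses to the principal part $e(-mz)-e(-m\overline{z})$ of $j_{N,m}$, while the cosets with lower-left entry $c$ (so $N\mid c$) reassemble, after the sum over residues modulo $\Gamma_\infty$, into the Kloosterman sums $K(m,-n;c)$ and the factors $I_1,J_1$ of Proposition \ref{niebth}. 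The main obstacle is the passage to $s=0$: the series for $H_{N,s}$ converges only for $\re(s)>0$ and the term-by-term limit is not absolutely convergent, so, following Petersson's splitting as in Proposition \ref{anacontin}, one must isolate a dominant piece and control the remainder through the Poisson-summed expansion together with uniform estimates on the Bessel functions. The hypothesis $v>\max\{y,1/y\}$, which bounds $v$ above all the heights $\im(Mz)$ in the orbit, is precisely what makes the contour shifts and the convergence arguments valid.
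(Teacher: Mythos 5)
First, a point of comparison: the paper does not prove this statement at all --- it is quoted verbatim as Theorem 1.1 of \cite{bklor}, so there is no internal proof to measure you against. Your outline is essentially the method of that reference (and of Lemma 4.4 of \cite{bk}): unfold the second shape of \eqref{pdefeq} over $\Gamma_\infty\backslash\Gamma_0(N)$, Poisson-summate the periodization in each coset, and observe that the hypothesis $v>\max\{y,1/y\}$ guarantees $v>\im(Mz)$ for every $M$, so that both zeros of $(x+i(v-\beta))(x+i(v+\beta))$ lie in the lower half-plane; the residues then give $\tfrac12\left(e(-mMz)-e(-m\overline{Mz})\right)$, which is the $s=1$ seed of Niebur's series via $I_{1/2}(t)=\sqrt{2/(\pi t)}\sinh t$. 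All of that is correct and correctly motivated.

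The step that does not work as written is the identification of the continued coefficient with $j_{N,m}(z)$. Two problems compound here. First, for $\re(s)>0$ the factor $|w|^{-s}$ in $\phi_s(w)=w^{-1}|w|^{-s}$ is not holomorphic in $x$, so no contour can be shifted and no residue computed; your clean evaluation of $\mathcal{I}_m$ is an $s=0$ computation, i.e.\ it takes place outside the region where the sum over cosets converges. Second, the bridge you propose --- that the $m$-th coefficient for $\re(s)>0$ is the seed of ``a Niebur--Poincar\'e series whose spectral parameter tends to $1$'' --- is not literally true: the $|w|^{-s}$ factor couples $v$ and $\beta=\im(Mz)$ inside the integral, so $\mathcal{I}_m$ does not factor as a function of $v$ times $\beta^{1/2}I_{\nu(s)}(2\pi m\beta)$ for any index $\nu(s)$, and one cannot invoke Niebur's continuation in the spectral parameter. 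Meanwhile the term-by-term $s=0$ limit $\sum_M\left(e(-mMz)-e(-m\overline{Mz})\right)$ diverges, so the limit cannot be taken inside the sum either. The actual argument must continue \emph{both} sides to convergent expressions and compare them there: a second Poisson summation, over the cosets with fixed lower-left entry $c$, converts the $m$-th coefficient of $H_{N,s}$ into a Kloosterman-sum/Bessel expansion with an analytic continuation past $s=0$ that can be matched term by term against the expansion of $j_{N,m}$ in Proposition \ref{niebth}. You gesture at exactly this in your closing sentences, but it is the logical backbone of the proof, not a remainder estimate to be deferred; the same mechanism (a simple pole of the coset sum $\sum_M\im(Mz)^{1+s}$ at $s=0$ meeting a factor vanishing at $s=0$) is what actually produces the term $\frac{3}{\pi[\SL_2(\Z):\Gamma_0(N)]v}$ from $m=0$, which you assert only by analogy with $E_2^*$.
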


\section{Weight $2$ Modular Forms Associated to Imaginary Quadratic Fields}\label{3}

Now we define and study the weight $2$ analogues of the functions $f_{k,\Delta}$.

\begin{definition}\label{fdef}
For $N\in\N$, discriminants $d$, $D$ that are congruent to squares modulo $4N$ with $D$ fundamental and $dD$ negative, and $s\in\C$ with $\re(s)>0$, we let
$$
f_{d,D,N,s}(\tau):=\frac{(|dD|)^{\frac{1+s}{2}}v^s}{2^{1+s}\pi}\sum_{Q\in\mathscr{Q}_{dD,N}}\chi_D(Q)\phi_s(Q(\tau, 1))
$$
with $\phi_s$ as in \eqref{phi} and define $f^*_{d,D,N}$ to be the analytic continuation of $f_{d,D,N,s}$ to $s=0$.
\end{definition} 

\begin{remark}
The existence of the analytic continuation is established by combining Lemma \ref{ftrp} with the analytic continuation of $H_{N,s}$ stated in Proposition \ref{anacontin}.
\end{remark}

With the trace operation from Definition \ref{tracedef}, we obtain the following relation. 

\begin{lemma}\label{ftrp}
We have
$$
f_{d,D,N,s}(\tau)=-\operatorname{tr}_{d,D,N}\left(H_{N,s} (\cdot,\tau)\right).
$$
\end{lemma}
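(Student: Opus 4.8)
The plan is to unpack both sides directly from their definitions and match them term by term, the key mechanism being that the quadratic form $Q(\tau,1)$ controls the shape of $\phi_s$ in $f_{d,D,N,s}$, while $H_{N,s}(z,\tau)$ is built from the very same function $\phi_s$ evaluated at translates of $z$ under $\Gamma_0(N)$. First I would write out $\operatorname{tr}_{d,D,N}(H_{N,s}(\cdot,\tau))$ using Definition \ref{tracedef}, obtaining
$$
\operatorname{tr}_{d,D,N}\left(H_{N,s}(\cdot,\tau)\right)=\sum_{Q\in\mathscr{Q}_{dD,N}/\Gamma_0(N)}\frac{\chi_D(Q)}{w_{N,Q}}H_{N,s}(z_Q,\tau),
$$
and then substitute the first expression in \eqref{pdefeq} for $H_{N,s}(z_Q,\tau)$, so that the $z$-variable there is specialized to the Heegner point $z_Q$. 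The goal is to show that the resulting double sum — over $Q$ modulo $\Gamma_0(N)$ and over $M\in\Gamma_0(N)$ — collapses to the single sum over all of $\mathscr{Q}_{dD,N}$ appearing in Definition \ref{fdef}, up to the factor $-1$.

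The central computation is to simplify the argument of $\phi_s$ when $z=z_Q$. Since $z_Q$ is the root of $Q(\tau,1)$ in $\H$, we can factor $Q(\tau,1)=a(\tau-z_Q)(\tau-\overline{z_Q})$ where $a=Q(1,0)$, and $\im(z_Q)=\frac{\sqrt{|dD|}}{2a}$. Plugging $z=z_Q$ into the factor $\frac{(\tau-z)(\tau-\overline z)}{\im(z)}$ in \eqref{pdefeq} turns it into a constant multiple of $Q(\tau,1)$, with the constant involving $a$ and $\sqrt{|dD|}$; tracking these constants through the homogeneity $\phi_s(\lambda w)=\lambda^{-1}|\lambda|^{-s}\phi_s(w)$ for $\lambda>0$ is what should reproduce the normalizing prefactor $\frac{(|dD|)^{(1+s)/2}v^s}{2^{1+s}\pi}$ and the sign. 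Here I expect to use that $a>0$ on $\mathscr{Q}_{dD,N}$ so that these scalars are positive reals, making the homogeneity clean.

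The real work is the combinatorial passage from the inner/outer double sum to one unrestricted sum. The action of $M\in\Gamma_0(N)$ on $\tau$ inside $H_{N,s}(z_Q,\tau)$ should be reinterpreted, via the right action \eqref{act} of $\SL_2(\Z)$ on forms, as sending $Q$ to $Q\circ M^{-1}$ (or $Q\circ M$), so that summing over $M\in\Gamma_0(N)$ sweeps out the full $\Gamma_0(N)$-orbit of $Q$ in $\mathscr{Q}_{dD,N}$. Combining the outer sum over orbit representatives $Q$ with the inner sum over $M$ then reconstitutes the sum over every $Q\in\mathscr{Q}_{dD,N}$. The factor $\frac{1}{w_{N,Q}}$ must cancel against the overcounting coming from the stabilizer $\Gamma_{N,z_Q}=\Gamma_{N,Q}$, which has order $2w_{N,Q}$; here one has to check that $\chi_D$ and the $\phi_s$-term are genuinely constant on the stabilizer, i.e. invariant under $M\mapsto$ its action, so that the $w_{N,Q}$ bookkeeping is exactly right. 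I expect this orbit-counting step, together with verifying the $\Gamma_{N,Q}$-invariance of $\chi_D(Q)$ and of the summand, to be the main obstacle; the two displayed forms of $H_{N,s}$ in \eqref{pdefeq} are presumably arranged precisely so that the manifestly $z$-translation-invariant second form makes this reindexing transparent. Once the orbit sum and the constants are reconciled, equality of the two sides follows for $\re(s)>0$, and the identity then persists to $s=0$ by analytic continuation.
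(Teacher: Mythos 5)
Your proposal is correct and follows essentially the same route as the paper: specialize $H_{N,s}$ at the Heegner point $z_Q$, use $Q(\tau,1)=\frac{\sqrt{|dD|}}{2}\frac{(\tau-z_Q)(\tau-\overline{z}_Q)}{\im(z_Q)}$ together with the homogeneity of $\phi_s$ to produce the prefactor, and then collapse the double sum over orbit representatives and $M\in\Gamma_0(N)$ into the full sum over $\mathscr{Q}_{dD,N}$, with the stabilizer of order $2w_{N,Q}$ cancelling the $1/w_{N,Q}$. The paper's proof carries out exactly the orbit-counting and constant-tracking you describe, using the second (manifestly reindexable) form of \eqref{pdefeq} as you anticipated.
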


\begin{proof}
For $M\in\Gamma_0(N)$, $Q\in \mathscr{Q}_{dD,N}$, and the group action defined in \eqref{act}, we have
$z_{Q\circ M} = M^{-1}z_Q$ and
$$
Q(\tau, 1)=\frac{\sqrt{|dD|}}{2}\frac{(\tau-z_Q)(\tau-\overline{z}_Q)}{\im (z_Q)},
$$
since $\im(z_Q)=\frac{\sqrt{|dD|}}{2a}$ for $Q=[a,b,c]$. Thus it follows
\begin{align*}
H_{N,s} (z_Q,\tau)&=-\frac{v^s }{2\pi}\sum_{M\in\Gamma_0(N)}\phi_s\left(\frac{(\tau-Mz_Q)(\tau-M\overline{z}_Q)}{\im (Mz_Q)}\right)\\
&= -\frac{v^s }{2\pi}\sum_{M\in\Gamma_0(N)}\phi_s\left(\frac{(\tau-z_{Q\circ M})(\tau-\overline{z}_{Q\circ M})}{\im (z_{Q\circ M})}\right)\\
&= -\frac{v^s }{2\pi}\sum_{M\in\Gamma_0(N)}\phi_s\left(\frac{2}{\sqrt{|dD|}}( Q \circ M)(\tau, 1)\right)\\
&=-\frac{v^s (|dD|)^{\frac{1+s}{2}}}{2^{2+s}\pi}\sum_{M\in\Gamma_0(N)}\phi_s\left(( Q \circ M)(\tau, 1)\right)
\end{align*}
Taking the twisted trace we obtain
\begin{multline*}
\operatorname{tr}_{d,D,N}\left(H_{N,s} (\cdot,\tau)\right)=-\frac{v^s (|dD|)^{\frac{1+s}{2}}}{2^{2+s}\pi}\sum_{Q\in\mathscr{Q}_{dD,N}/\Gamma_0(N)}\frac{\chi_D(Q)}{w_{N,Q}}\sum_{M\in\Gamma_0(N)}\phi_s\left(( Q \circ M)(\tau, 1)\right)\\
=-\frac{v^s (|dD|)^{\frac{1+s}{2}}}{2^{2+s}\pi}\sum_{Q\in\mathscr{Q}_{dD,N}}\frac{\chi_D(Q)}{w_{N,Q}}\cdot 2w_{N,Q}\phi_s\left( Q (\tau, 1)\right)=-f_{d,D,N,s}(\tau)
\end{multline*}
\end{proof}

Theorem \ref{fexp} now follows directly from Proposition \ref{pexp} and taking the analytic continuation to $s=0$ in Lemma \ref{ftrp}.\\

We now move on to compute the Fourier expansion of $f^*_{d,D,N}$ directly and prove Theorem \ref{anafexp}.

\begin{proof}[Proof of Theorem \ref{anafexp}]
We follow the approach of Appendix 2 of \cite{zamrq}. For $v > \frac{\sqrt{|dD|}}{2}$, we obtain by Poisson summation
\begin{align*}
f_{d,D,N,s} (\tau)&= \frac{(|dD|)^{\frac{1+s}{2}}v^s}{2^{1+s}\pi}\sum_{a>0 \atop N|a } \sum_{ b\in\Z \atop b^2\equiv dD\pmod{4a}}\chi_D\left(\left[a,b,\frac{b^2-dD}{4a}\right]\right)\phi_s \left(a\tau ^2+b\tau+\frac{b^2-dD}{4a}\right)\\
&=\frac{(|dD|)^{\frac{1+s}{2}}v^s}{2^{1+s}\pi}\sum_{a>0 \atop N|a }\sum_{n\in\Z} \sum_{ b\pmod{2a} \atop b^2\equiv dD\pmod{4a}}\chi_D\left(\left[a,b,\frac{b^2-dD}{4a}\right]\right)\\
&\qquad\qquad\times\int_\R\phi_s \left(a(\tau +t) ^2+b(\tau+t)+\frac{b^2-dD}{4a}\right)e(-nt)dt.
\end{align*}
Here we used that
$$
a\tau ^2+(b+2an)\tau+\frac{(b+2an)^2-dD}{4a} = a(\tau +n) ^2+b(\tau+n)+\frac{b^2-dD}{4a}
$$
and that $\chi_D$ is invariant under translation.
Together with
$$
\int_\R\phi_s \left(a(\tau +t) ^2+b(\tau+t)+\frac{b^2-dD}{4a}\right)e(-nt)dt = a^{-1-s}e(n\tau)\int_{\R+iv}\phi_s \left(t ^2-\frac{dD}{4a^2}\right)e(-nt)dt
$$
\noindent we obtain 
$$
f_{d,D,N,s} (\tau)
=\frac{(|dD|)^{\frac{1+s}{2}}v^s}{2^{1+s}\pi}\sum_{a>0 \atop N|a }\sum_{n\in\Z} S_{d,D}(a,n) a^{-1-s}e(n\tau)\int_{\R+iv}\phi_s \left(t ^2-\frac{dD}{4a^2}\right)e(-nt)dt.
$$

First we consider terms with $n\neq 0$. 
We have to show locally uniform convergence in $s$ of the double sum. For this we will bound the integral locally uniformly for $\sigma:=\re(s) > -\varepsilon$ for some $\varepsilon >0$ and independently of $a$ and $n$. First we write 
$$
\int_{\R +iv}\phi_s\left(t^2-\frac{dD}{4a^2}\right)e(-nt)dt = \int_\R \left(\left(t^2-v^2-\frac{dD}{4a^2}\right)^2 + 4v^2 t^2\right)^{-\frac{s}{2}}\frac{e(-nt)dt}{(t+iv)^2-\frac{dD}{4a^2}}.
$$
Note that the integrand is holomorphic in $t$ in the region $\im (t) > \frac{\sqrt{|dD|}}{2}-v$. Thus for $n<0$, we may shift the path of integration to $\R + i\infty$ and the integral vanishes. 

For $n>0$, we may fix $\alpha \in \left(0, v-\frac{\sqrt{|dD|}}{2}\right)$ and shift the path of integration to $\R -i\alpha$. This yields
\begin{multline*}
\left|\int_{\R - i\alpha }\phi_s\left((t+iv)^2-\frac{dD}{4a^2}\right)e(-nt)dt \right| \\
\leq 2 e^{-2\pi n\alpha}\int_0 ^\infty \left(\left(t^2-(v-\alpha) ^2 -\frac{dD}{4a^2}\right)^2 + 4(v-\alpha )^2 t^2\right)^{-\frac{1+\sigma}{2}} dt.
\end{multline*}

Now we apply the estimates
$$
\left(t^2-(v-\alpha )^2-\frac{dD}{4a^2}\right)^2 + 4(v-\alpha )^2 t^2 \geq \begin{cases}
 \left((v-\alpha )^2 +\frac{dD}{4}\right)^2, & \text{for every $t$, } \\
\left(t^2+(v-\alpha) ^2 \right)^2, & \text{for $t>v-\alpha$,}
\end{cases}
$$
to obtain
\begin{multline*}
\int_0 ^\infty \left(\left(t^2-(v-\alpha )^2-\frac{dD}{4a^2}\right)^2 + 4(v-\alpha )^2 t^2\right)^{-\frac{1+\sigma}{2}} dt \\
\leq \int _0 ^{v-\alpha} \left((v-\alpha )^2 +\frac{dD}{4}\right)^{-1-\sigma} dt
+ \int_{v-\alpha} ^\infty \left(t^2+(v-\alpha) ^2 \right)^{-1-\sigma} dt.
\end{multline*}
The last bound is locally uniform for $\sigma > -\frac12$ and independent of $a$ and $n$. Thus the overall sum is uniformly bounded by
$$
\ll \sum_{n\geq 1}\sum_{a>0\atop N|a}S_{d,D}(a,n)a^{-1-\sigma}e^{-2\pi n\alpha}.
$$

We define the half-integral weight Kloosterman sum as
$$
K^*(m,n,c):=\sum_{a,d\pmod{c}^*\atop ad\equiv 1\pmod{c}}\left(\frac{c}{d}\right)\left(\frac{-4}{d}\right)^{3/2} e\left(\frac{na+md}{c}\right),
$$
where $(\frac{\cdot}{\cdot})$ denotes the Kronecker symbol. Plugging $c\mapsto 4a$ into Proposition 3 of \cite{dit} and noting that the definition of $S$ given there differs from ours by a factor $2$, we obtain 
$$
S_{d,D}(a,n)=\frac{1-i}{4}\sum_{r|(a,n)}\left(\frac{D}{r}\right)\sqrt{\frac{r}{a}}\left(1+\left(\frac{4}{a/r}\right)\right)K^*\left(d,\frac{n^2 D}{r^2}, \frac{4a}{r}\right),
$$
and hence
$$
\sum_{a>0\atop N|a}S_{d,D}(a,n)a^{-1-\sigma}
=\frac{1-i}{4}\sum_{r|n} r^{-1-\sigma}\left(\frac{D}{r}\right)
\sum_{a>0 \atop N|a}\left(1+\left(\frac{4}{a}\right)\right)\frac{K^*\left(d,\frac{n^2 D}{r^2},4a\right)}{a^{\frac32+\sigma}}.
$$
It has been observed in the remark following Theorem 2.1 of \cite{fo} that the Selberg-Kloosterman zeta function
$$
S_{m,n}(s):=\sum_{a>0}\frac{K^*\left(m,n,a\right)}{a^s}
$$
has an analytic continuation to $s=\frac32$ for $mn<0$. 
Since $S_{m,n}$ has only finitely many poles in $[1,2]$, there is an $\varepsilon > 0$ such that the function $S_{d,\frac{n^2 D}{r^2}}\left(\frac32 +\sigma\right)$ has an analytic continuation to $\sigma > -\varepsilon$. This gives a locally uniform bound for $\sigma > -\varepsilon$ and we obtain the analytic continuation for the sum over the positive $n$ by just plugging in $s=0$.\\

Now we have, for $v>\frac{\sqrt{|dD|}}{2}$ and $n>0$,
$$
\int_{\R+iv}\left(t ^2-\frac{dD}{4a^2}\right)^{-1}e(-nt)dt = -\frac{4\pi  a}{\sqrt{|dD|}} \sinh\left(\frac{\pi n \sqrt{|dD|}}{a}\right),
$$
since $t\mapsto \sinh(\kappa t)$ is the inverse Laplace transform of $s\mapsto \frac{\kappa}{s^2 -\kappa^2}$ (see for example (29.3.17) of \cite{as}). So all in all we obtain that for $n>0$, the $n$-th Fourier coefficient of $f_{d,D,N}^*$ equals
$$
-2\sum_{a>0 \atop N|a }S_{d,D}(a,n)\sinh\left(\frac{\pi n\sqrt{|dD|}}{a}\right).
$$
Finally, it follows from Proposition \ref{pexp} and Lemma \ref{ftrp} that the remaining part of the Fourier expansion, i.e.~the $n=0$ term, equals 
$$
-\operatorname{tr}_{d,D,N}\left(\frac{3}{\pi[\SL_2(\Z):\Gamma_0(N)]v}\right)=-\frac{3H(d,D,N)}{\pi[\SL_2(\Z):\Gamma_0(N)]v}.
$$
\end{proof}

\begin{proof}[Proof of Theorem \ref{inte}]
By Theorem \ref{fexp}, the $n$-th Fourier coefficient of $f^*_{d,D,N}$ is $-\operatorname{tr}_{d,D,N}(j_{N,n} )$.
If $\Gamma_0(N)$ has genus $0$, then $j_{N,n}$ is weakly holomorphic on the modular curve $X_0(N)$. Lemma 5.1 (v) of \cite{bruo} states that the twisted Heegner divisor 
$$
Z_{d,D,N}:= \sum_{Q\in\mathscr{Q}_{dD,N}/\Gamma_0(N)}\frac{\chi_D(Q)}{w_{N, Q}}z_Q
$$
is defined over $\Q(\sqrt{D})$. This means that
$$
\left\langle Z_{d,D,N}, j_{N,n}\right\rangle :=\sum_{Q\in\mathscr{Q}_{dD,N}/\Gamma_0(N)}\frac{\chi_D(Q)}{w_{N, Q}}j_{N,n}(z_Q) =\operatorname{tr}_{d,D,N}(j_{N,n})\in\Q(\sqrt{D}).
$$
By Theorem I of \cite{cy}, $j_{N,1}(z_Q)$ is an algebraic integer for every quadratic form $Q\in\mathscr{Q}_{dD,N}$. Now $j_{N,n}$ is a polynomial in $j_{N,1}$, so the twisted sum $\operatorname{tr}_{d,D,N}(j_{N,n} )$ is also an algebraic integer, which implies the statement.
\end{proof}

\section{Regularized Inner Products}\label{4}

In this section we restrict to the full modular group and therefore drop the subscript $N$ throughout. Let $f$, $g$ be meromorphic modular forms of weight $k$ which decay like cusp forms at $i\infty$ and have poles at $\mathfrak{z}_1,\dots,\mathfrak{z}_r\in \SL_2(\Z)\backslash\H$.  We choose a fundamental domain $\mathcal{F}$ such that for every $j\in\{1,\dots,r\}$, the representative of $\mathfrak{z}_j$ in $\mathcal{F}$ lies in the interior of $\Gamma_{\mathfrak{z}_j}\mathcal{F}$. We identify the $\mathfrak{z}_1,\dots,\mathfrak{z}_r\in \SL_2(\Z)\backslash\H$ with their representatives in $\mathcal{F}$.\\

For an analytic function $A(s)$ in $s=(s_1,\dots,s_r)$, denote by $\mathrm{CT}_{s=0}A(s)$  the constant term of the meromorphic continuation of $A(s)$ around $s_1=\cdots =s_{r}=0$.  Then the regularized inner product introduced in \cite{bkcyc} is given by
\begin{equation}\label{eqn:OurReg}
\left<f,g\right>:= \operatorname{CT}_{s=0}\left(\int_{\mathcal{F}} f(\tau) \prod_{\ell=1}^r\left|X_{\z_\ell}(\tau)\right|^{2s_\ell} \overline{g(\tau)} v^{k}\frac{dudv}{v^2}\right).
\end{equation}
 Note that, as $z\to \mathfrak{z}_\ell$, we have $X_{\z_\ell}(z)\to 0$, so the integral in \eqref{eqn:OurReg} converges if we have $\re (s_{\ell})\gg 0$ for every $1\leq \ell\leq r$. One can show that the regularization is independent of the choice of fundamental domain. Since the functions we integrate do not decay like cusp forms, we need to use another regularization by Borcherds \cite{bor}. Namely for holomorphic modular forms $f,g$ of weight $k$, we define
  \begin{equation}
\label{eq:Ifg} \left\langle f,g\right\rangle :=  \text{CT}_{s=0}\left(\lim_{T\rightarrow\infty}\int_{\mathcal{F}_T} f(\tau) \overline{g(\tau)}v^{k-s} \frac{dudv}{v^2}\right),
  \end{equation}
	whenever it exists. Here, for a fundamental domain $\mathcal{F}$ and $T>0$, we set 
	$$
	\mathcal{F}_T :=\left\{z\in\mathcal{F}: \im(z)\le T\right\}.
	$$
	To compute the inner products in Theorem \ref{rip}, we split the domain of integration into a part which contains all the poles of the integrands, where we apply the regularization of Bringmann, Kane, and von Pippich, and a part around the cusp $i\infty$, where we apply Borcherds's regularization. Therefore, the regularized integral will look like
\begin{multline}\label{intsplit}
\left\langle f,g\right\rangle =\text{CT}_{s=0}\left(\int_{\mathcal{F}_Y}f(\tau)\prod_{\ell=1}^r\left|X_{\z_\ell}(\tau)\right|^{2s_\ell}\overline{g(\tau)}v^k\frac{dudv}{v^2} \right)\\
+ \text{CT}_{s=0}\left(\lim_{T\rightarrow\infty}\int_{\mathcal{F}_T\setminus \mathcal{F}_Y} f(\tau)  \overline{g(\tau)}v^{k-s} \frac{dudv}{v^2}\right),
\end{multline}
where $f,g$ are meromorphic modular forms of weight $k$ and $Y>1$ a fixed constant, such that all poles of $f$ and $g$ lie in $\mathcal{F}_Y$. Here we can assume that $\mathcal{F}$ contains all the poles of $f$ and $g$ as well as $[0,1]+i[Y,\infty]$.\\

To prepare the proof, we first look at elliptic expansions of the polar harmonic Maass forms $H_z(\tau):=H_1(z,\tau)$. For $X_\varrho(\tau)\ll 1$, we have 
\begin{equation}\label{Hellxp}
H_z(\tau)=-\frac{1}{2\pi i}\frac{J'(\tau)}{J(\tau)-J(z)}=\frac{1}{(\tau-\overline{\varrho})^2}\left(-\frac{\delta_{z,\varrho}w_z y}{\pi }X_\varrho(\tau)^{-1}+\sum_{n\geq 0}a_{z,\varrho}(n)X_\varrho(\tau)^n\right),
\end{equation}
where $\delta_{z,\varrho}$ is defined to be $1$ if $z\in \SL_2(\Z)\varrho$ and $0$ otherwise (cf. the remark following Proposition \ref{anacontin}). Furthermore, let
$$
G_z(\tau):=-\frac{1}{2\pi }\log\left|J(\tau)-J(z)\right|,
$$
so that $\xi_0 (G_z)=H_z$ with $\xi_0$ as in \eqref{xidef}. 

\begin{lemma}
The function $G_z$ is a weight $0$ polar harmonic Maass form. For every $\varrho\in\H$, it has an elliptic expansion
\begin{equation}\label{Gellxp}
G_z(\tau)=- \frac{\delta_{z,\varrho}w_z}{2\pi }\log(\left|X_\varrho(\tau)\right|)+\sum_{n\geq 0}A^+_{z,\varrho}(n)X_\varrho(\tau)^n  + \sum_{n> 0}A^-_{z,\varrho}(n)\overline{X_\varrho(\tau)}^{n},
\end{equation}
which converges for $|X_\varrho(\tau)|\ll 1$. 
Moreover, we have 
$$
A^-_{z,\varrho}(n)=\frac{\overline{a_{z,\varrho}(n-1)}}{4\eta n}
$$
with $a_{z,\varrho}(n)$ as in \eqref{Hellxp} and 
\begin{equation}\label{a0}
A^+_{z,\varrho}(0)=-\frac{1}{2\pi}\times\begin{cases}
\log\left|J(\varrho)-J(z)\right|,& \text{if $\varrho\notin\SL_2(\Z) z$,}\\
\log\left|2yJ'(z)\right|,& \text{if $\varrho\in \SL_2(\Z) z$ and $i,\rho\notin\SL_2(\Z) z$,}\\
 \log\left|2J''(i)\right|,& \text{if $\varrho, z\in\SL_2(\Z) i$,}\\
\log\left|\frac{\sqrt{3}}{2}J'''(\rho)\right|,& \text{if $\varrho, z\in\SL_2(\Z) \rho$.}
\end{cases}
\end{equation}
\end{lemma}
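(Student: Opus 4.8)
The plan is to first place $G_z$ in the space $\mathcal H_0(1)$, and then to extract its elliptic coefficients in two independent steps: the logarithmic term and the non-meromorphic coefficients $A^-_{z,\varrho}(n)$ are read off by applying $\xi_0$ and comparing with the expansion \eqref{Hellxp} of $H_z$, whereas the constant $A^+_{z,\varrho}(0)$ requires a direct local analysis of $\log|J(\tau)-J(z)|$ at $\varrho$.

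For the first step I would check the conditions of Definition \ref{PolarHMFDefn}. Weight-$0$ modularity is immediate from the $\SL_2(\Z)$-invariance of $J$. Harmonicity follows from the factorization $\Delta_0=-\xi_2\circ\xi_0$ in \eqref{xidef}: since $\xi_0(G_z)=H_z$ is meromorphic, it is holomorphic away from its poles, so $\xi_2(H_z)=0$ and hence $\Delta_0(G_z)=0$. The singularity and growth conditions follow from the behaviour of $\log|J(\tau)-J(z)|$, which has only a logarithmic singularity (milder than any pole) at the points $\SL_2(\Z)$-equivalent to $z$ and, because $J(\tau)\sim q^{-1}$, grows at most linearly in $v$ at the cusp. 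Hence $G_z\in\mathcal H_0(1)$ and Proposition \ref{EllipticExpansion} supplies a convergent elliptic expansion around each $\varrho$; the logarithmic term in \eqref{Gellxp} is precisely the weight-$0$, index-$0$ non-meromorphic contribution, since $\beta_0(1-|X_\varrho|^2;1,0)=-2\log|X_\varrho|$, and the absence of a meromorphic principal part reflects that $G_z$ has no genuine pole.

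Next I would substitute the ansatz \eqref{Gellxp}, with undetermined coefficients and logarithmic term, and apply $\xi_0$, using $\frac{\partial}{\partial\tau}X_\varrho=\frac{2i\eta}{(\tau-\overline{\varrho})^2}$ and $1-|X_\varrho|^2=\frac{4\eta v}{|\tau-\overline{\varrho}|^2}$. As $\xi_0$ annihilates the holomorphic part $\sum_{n\ge0}A^+_{z,\varrho}(n)X_\varrho^n$, comparison with \eqref{Hellxp} determines everything in its image: matching the principal $X_\varrho^{-1}$-term fixes the logarithmic coefficient as $-\frac{\delta_{z,\varrho}w_z}{2\pi}$, and matching $\overline{X_\varrho}^{\,n}\mapsto X_\varrho^{n-1}$ yields $A^-_{z,\varrho}(n)=\frac{\overline{a_{z,\varrho}(n-1)}}{4\eta n}$. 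The coefficients $A^+_{z,\varrho}(n)$ lie in $\ker\xi_0$ and are invisible to this comparison.

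It remains to compute $A^+_{z,\varrho}(0)$, which I regard as the main obstacle. If $\varrho\notin\SL_2(\Z)z$, then $G_z$ is real-analytic at $\varrho$ and $A^+_{z,\varrho}(0)=G_z(\varrho)$, giving the first case. Otherwise $J(\tau)-J(\varrho)$ vanishes at $\varrho$ to order $w_\varrho\in\{1,2,3\}$ (the ramification index of $J$, with $J^{(j)}(\varrho)=0$ for $j<w_\varrho$), so $J(\tau)-J(\varrho)=\frac{J^{(w_\varrho)}(\varrho)}{w_\varrho!}(\tau-\varrho)^{w_\varrho}+\cdots$; substituting $\tau-\varrho=X_\varrho(\tau-\overline{\varrho})$ with $\tau-\overline{\varrho}\to 2i\eta$ turns the leading term into $\frac{(2i\eta)^{w_\varrho}}{w_\varrho!}J^{(w_\varrho)}(\varrho)X_\varrho^{w_\varrho}$, from which I read off both the logarithmic coefficient $-\frac{w_\varrho}{2\pi}$ and the constant $-\frac{1}{2\pi}\log\big(\frac{(2\eta)^{w_\varrho}}{w_\varrho!}|J^{(w_\varrho)}(\varrho)|\big)$. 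The delicate point is to verify that this constant is independent of the chosen representative $\varrho$: using that $J^{(w_\varrho)}$ transforms with weight $2w_\varrho$ (at the elliptic fixed points the vanishing of the lower derivatives removes the inhomogeneous terms) together with $\im(M\varrho)=\eta/|c\varrho+d|^2$, the combination $\frac{(2\eta)^{w_\varrho}}{w_\varrho!}|J^{(w_\varrho)}(\varrho)|$ is $\SL_2(\Z)$-invariant and evaluates to $|2yJ'(z)|$, $|2J''(i)|$, and $\big|\frac{\sqrt3}{2}J'''(\rho)\big|$ in the three remaining cases.
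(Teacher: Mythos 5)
Your proposal is correct and follows essentially the same route as the paper: establish $G_z\in\mathcal H_0(1)$, identify the logarithmic term via $\beta_0(\,\cdot\,;1,0)$, determine the non-meromorphic coefficients by applying $\xi_0$ and comparing with the elliptic expansion of $H_z$, and compute $A^+_{z,\varrho}(0)$ by a local Taylor expansion of $J(\tau)-J(z)$ at the (possibly elliptic) point, which is exactly the paper's limit computation $\lim_{\tau\to z}\log\bigl|\tfrac{J(\tau)-J(z)}{(\tau-z)^{w_z}}\bigr|+\log\bigl((2y)^{w_z}\bigr)$. Your added check that the resulting constant is independent of the representative in the $\SL_2(\Z)$-orbit is a small refinement the paper leaves implicit.
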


\begin{proof}
One easily checks that $G_z$ is a polar harmonic Maass form of weight $0$. By Proposition \ref{EllipticExpansion}, it has for every $\varrho\in\H$ an elliptic expansion 
$$
G_z(\tau)=\sum_{n\gg -\infty}A_{z,\varrho}^+(n)X_\varrho(\tau)^n  + \sum_{ n\ll \infty}A_{z,\varrho}^-(n)\beta_0\left(1-\left|X_\varrho(\tau)\right|^2, 1, -n\right)X_\varrho(\tau)^{n}
$$
for $X_\varrho(\tau)\ll 1$.
Noting that
$$
\beta_0(1-r^2; 1, -n)=\int_0 ^{1-r^2}(1-t)^{-n-1}dt +\delta_{n\neq 0}\cdot\frac{1}{n}= \begin{cases}
\frac{r^{-2n}}{n},& \text{if $n\neq 0$,} \\
-\log(r^2), &\text{if $n=0$,}
\end{cases}
$$
we obtain an elliptic expansion of the shape
$$
G_z(\tau)=A_{z,\varrho}^-(0)\log(\left|X_\varrho(\tau)\right|^2)+\sum_{n\gg -\infty}A^+_{z,\varrho}(n)X_\varrho(\tau)^n  + \sum_{n\ll \infty \atop n\neq 0}A_{z,\varrho}^-(n)\overline{X_\varrho(\tau)}^{-n}
$$
Now using 
$$
\xi_0 \left(\overline{X_\varrho (\tau)}\right)=2i\overline{\partial_{\overline{\tau}}\frac{\overline{\tau}-\overline{\varrho}}{\overline{\tau}-\varrho}}=-\frac{4\eta}{(\tau-\overline{\varrho})^2}
$$
and 
$$
\xi_0\left(\log(|X_\varrho(\tau)|^2)\right) =2i\overline{\partial_{\overline{\tau}}\log(|X_\varrho(\tau)|^2)} =2i\overline{\frac{\partial_{\overline{\tau}}|X_\varrho(\tau)|^2}{|X_\varrho(\tau)|^2}}
= 2i\overline{X_\varrho(\tau)}\frac{\overline{\partial_{\overline{\tau}}\overline{X_\varrho(\tau)}}}{|X_\varrho(\tau)|^2}=-\frac{4\eta}{(\tau-\overline{\varrho})^2}X_\varrho(\tau)^{-1}
$$
gives
$$ 
\xi_0 \left(G_z(\tau)\right)=-\frac{4\eta}{(\tau-\overline{\varrho})^2}\left(\overline{A_{z,\varrho}^-(0)}X_\varrho(\tau)^{-1}+\sum_{n\ll\infty\atop n\neq 0}n\overline{A_{z,\varrho}^-(n)}X_\varrho(\tau)^{-n-1}\right).
$$
We compare with $\eqref{Hellxp}$ and obtain $A_{z,\varrho}^-(n)=0$ for $n>0$, 
$$
A_{z,\varrho}^-(0)=-\frac{\delta_{z,\varrho}w_z}{2\pi },\quad\text{and}\quad-4\eta n\overline{A_{\varrho, z}(n)}=a_{\varrho, z}(-n-1).
$$
for $n< 0$. We also have $A^+_{z,\varrho}(n)=0$ for $n< 0$ since the principal part of $G_z$ at $\varrho$ comes entirely from the $n=0$ term.\\

For the evaluation of $A_{z,\varrho}^+(0)$, note that
$$
A^+_{z,\varrho}(0)=-\frac{1}{2\pi}\lim_{\tau\rightarrow\varrho}\left(\log\left|J(\tau)-J(z)\right|- \delta_{z,\varrho}w_z\log(\left|X_\varrho(\tau)\right|)\right),
$$
which equals 
$$
G_z(\varrho)=-\frac{1}{2\pi w_z}\log\left|J(\tau)-J(z)\right|
$$
if $\varrho\neq z$. If $\varrho=z$, note that 
\begin{multline*}
\lim_{\tau\rightarrow z}\left(\log\left|J(\tau)-J(z)\right|- w_z\log(\left|X_\varrho(\tau)\right|)\right) 
= \lim_{\tau\rightarrow z}\left(\log\left|J(\tau)-J(z)\right|+ \log\left(\left|\frac{\tau -\overline{z}}{\tau -z}\right|^{w_z}\right)\right)\\
=\lim_{\tau\rightarrow z}\log\left|\frac{J(\tau)-J(z)}{(\tau -z)^{w_z}}\right| + \log\left((2y)^{w_z}\right)
=\log\left|\frac{(2y)^{w_z}}{w_z!}J^{(w_z)}(z)\right|,
\end{multline*}
which implies the statement.
\end{proof}

\begin{lemma}\label{hprod}
For every $z,\z\in\H$, we have
$$
\left\langle H_z, H_\z\right\rangle = -A^+_{\z,z}(0)
$$
with $A^+_{\z,z}(0)$ as in \eqref{a0}.
\end{lemma}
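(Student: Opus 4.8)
The plan is to exploit that $H_\z=\xi_0(G_\z)$ for the weight-$0$ primitive $G_\z$ constructed above. Since $G_\z$ is real-valued, the splitting \eqref{xidef} gives $\overline{H_\z}=-2i\,\partial_{\bar\tau}G_\z$, and because $\tau\mapsto H_z(\tau)$ is meromorphic it is holomorphic away from its poles, so that off the poles
$$
H_z(\tau)\,\overline{H_\z(\tau)}\,v^2\frac{du\,dv}{v^2}=-\,d\bigl(H_z(\tau)G_\z(\tau)\,d\tau\bigr).
$$
Thus the integrand of \eqref{intsplit} is, up to the regularizing factors $\prod_\ell|X_{\z_\ell}|^{2s_\ell}$, a total derivative. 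The $1$-form $H_z\,d\tau$ is $\SL_2(\Z)$-invariant (its weight $2$ cancels the weight $-2$ of $d\tau$) and $G_\z$ is invariant, which is exactly what will allow an application of Stokes' theorem on the fundamental domain.

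First I would substitute $\overline{H_\z}=-2i\,\partial_{\bar\tau}G_\z$ into the $\mathcal F_Y$-integral and integrate by parts, writing the integrand as $-d\bigl(H_z\prod_\ell|X_{\z_\ell}|^{2s_\ell}G_\z\,d\tau\bigr)$ plus a bulk correction in which $\partial_{\bar\tau}$ hits the regularizing factors (using $\partial_{\bar\tau}H_z=0$ off the poles). Stokes' theorem turns the exact part into boundary integrals over $\partial\mathcal F_Y$, which fall into three kinds of pieces: the vertical sides and circular arcs of $\partial\mathcal F$, which cancel in pairs by the $\SL_2(\Z)$-invariance of $H_z\,d\tau$ and $G_\z$; the horizontal segment at height $Y$; and, through the bulk correction, the poles $z$ and $\z$.

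The heart of the computation is the localization of the bulk correction as $s\to 0$. As $s_\ell\to 0^+$, $\partial_{\bar\tau}|X_{\z_\ell}|^{2s_\ell}$ concentrates near $\z_\ell$, so each pole contributes a residue-type term. At the pole $z$ of $H_z$ I would insert the principal part $-\frac{w_z}{2\pi i}\frac{1}{\tau-z}$ (the remark following Proposition \ref{anacontin}) together with the elliptic expansion \eqref{Gellxp} of $G_\z$ around $z$; the residue then selects precisely the constant term $A^+_{\z,z}(0)$, with the factor $w_z$ and the $2\pi$'s combining to produce $-A^+_{\z,z}(0)$. At the pole $\z$ of $G_\z$, where $H_z$ is regular whenever $z\notin\SL_2(\Z)\z$, the logarithmic singularity of $G_\z$ only yields an $\varepsilon\log\varepsilon$ contribution that vanishes, so nothing arises there; the coincident case $z\in\SL_2(\Z)\z$ is absorbed into the corresponding branch of \eqref{a0}.

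Finally, for the cusp I would show that the horizontal boundary term at height $Y$ cancels against the $Y$-dependence of the Borcherds-regularized integral over $\mathcal F_T\setminus\mathcal F_Y$, where the same integration-by-parts identity applies with $\partial_{\bar\tau}$ now also hitting the factor $v^{-s}$. Since $H_z$ and $H_\z$ both have constant Fourier coefficient $1$, this cusp analysis is not automatic, but after extracting $\mathrm{CT}_{s=0}$ and letting $T\to\infty$ the net contribution vanishes, leaving $\left\langle H_z,H_\z\right\rangle=-A^+_{\z,z}(0)$. I expect the main obstacle to be exactly this interplay of the two regularizations — controlling the $|X_{\z_\ell}|^{2s_\ell}$-regularization at the poles simultaneously with the $v^{-s}$-regularization at the cusp, and verifying that all the $\varepsilon$- and $Y$-dependent pieces cancel so that only the residue term $-A^+_{\z,z}(0)$ survives.
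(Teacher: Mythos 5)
Your plan is essentially the paper's proof: it replaces $\overline{H_\z}$ by $\xi_0$ applied to $G_\z(\tau)=-\tfrac{1}{2\pi}\log|J(\tau)-J(\z)|$, applies Stokes on both pieces of \eqref{intsplit} so that the two height-$Y$ boundary terms cancel and the $T$- and $v^{-s}$-terms drop out in the continuation, and then localizes the remaining bulk term at the poles, where the pairing of the principal part of $H_z$ with the constant term $A^+_{\z,z}(0)$ of the elliptic expansion of $G_\z$ produces the residue. The only caveat is that the cancellation of the $\partial\mathcal F$ side-contributions uses that one may set the (non-$\SL_2(\Z)$-invariant) factors $|X_{\z_\ell}|^{2s_\ell}$ equal to $1$ on the boundary away from the poles before invoking invariance, exactly as in the paper.
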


\begin{proof}

Applying Stokes's Theorem to the second summand of \eqref{intsplit}, we obtain for $\re(s)>1$
\begin{align*}
\int_{\mathcal{F}_T\setminus \mathcal{F}_Y} H_z(\tau)&\overline{H_\z(\tau)}v^{-s}dudv =-\int_{\mathcal{F}_T\setminus \mathcal{F}_Y}H_z(\tau) \xi_0\left(\overline{G_\z(\tau)}\right)v^{-s}dudv\\ 
&=-\int_{\mathcal{F}_T\setminus \mathcal{F}_Y}\xi_0\left(\overline{H_z(\tau)G_\z(\tau)v^{-s}}\right)dudv - s\int_{\mathcal{F}_T\setminus \mathcal{F}_Y} H_z(\tau)G_\z(\tau)v^{-s-1}dudv\\
&=-\int_{\partial(\mathcal{F}_T\setminus \mathcal{F}_Y)}H_z(\tau)G_\z(\tau)v^{-s}d\tau- s\int_{\mathcal{F}_T\setminus \mathcal{F}_Y} \overline{H_\z(\tau)G_z(\tau)}v^{-s-1}dudv\\
&=\int_0 ^1 H_z(u+iT)G_\z(u+iT)T^{-s}du-\int_0 ^1 H_z(u+iY)G_\z(u+iY)Y^{-s}du \\
&\qquad + s\int_Y ^T \int_0 ^1  \overline{H_z(u+iv)G_\z(u+iv)}v^{-s-1}dudv.
\end{align*}

Now we have $G_\z(\tau)=v+O(1)$ and $H_z(\tau)=O(1)$ as $v\rightarrow\infty$. Hence, the first summand vanishes as $T\rightarrow \infty$, provided that $\re(s)$ is sufficiently large, and the limit as $T\rightarrow \infty$ of the third integral has a meromorphic continuation to $\C$ with the only pole at $s=1$. Therefore the contributions of the first and third summand vanish in the analytic continuation to $s=0$.
The second summand is analytic in $s=0$, so in total we have
\begin{equation}\label{Yeq}
\text{CT}_{s=0}\int_{\mathcal{F}\setminus \mathcal{F}_Y}H_z(\tau)\overline{H_\z(\tau)}v^{-s}d\tau = -\int_0 ^1 H_z(u+iY)G_\z (u+iY)du.
\end{equation}


To compute the first summand of \eqref{intsplit}, note that the functions $H_z$ and $H_\z$ have simple poles only at $z$ and $\z$. Thus the regularized inner product equals
\begin{multline*}
\left\langle H_z, H_\z\right\rangle= \text{CT}_{(s_1,s_2)=(0,0)}\int_\mathcal{F}H_z(\tau)|X_z(\tau)|^{2s_1}|X_\z(\tau)|^{2s_2}\overline{H_\z(\tau)}dudv\\
=-\text{CT}_{(s_1,s_2)=(0,0)}\int_{\mathcal{F}}H_z(\tau)|X_z(\tau)|^{2s_1}|X_\z(\tau)|^{2s_2}\xi_0\left(\overline{G_\z(\tau)}\right)dudv,
\end{multline*}
By Stokes's Theorem, the integral equals
\begin{equation}\label{polesplit}
-\int_{ \mathcal{F}}H_z(\tau)\overline{\xi_0 \left(|X_z(\tau)|^{2s_1}|X_\z(\tau)|^{2s_2}\right)}G_\z(\tau)dudv -\int_{\partial  \mathcal{F}_Y} H_z(\tau)|X_z(\tau)|^{2s_1}|X_\z(\tau)|^{2s_2}G_\z(\tau)d\tau.
\end{equation}
Since there are no poles on $\mathcal{F}_Y$, the analytic continuation of the second summand is given by just plugging in $(s_1,s_2)=(0,0)$. Now note that 
$$
\int_{\partial  \mathcal{F}_Y} H_z(\tau)G_\z(\tau)d\tau = -\int_0 ^1 H_z(u+iY)G_\z(u+iY)du,
$$
which cancels with \eqref{Yeq}. Therefore the contribution from the cusp $i\infty$ vanishes.\\

We are left to compute the analytic continuation of the first summand of \eqref{polesplit}. For this, we closely follow the proof of Theorem 6.1 in \cite{bkcyc}. For $\delta>0$ and $\varrho\in\H$, we let $B_\delta(\varrho)$ denote the closed disc of radius $\delta$ around $\varrho$ and split the domain of integration into $B_\delta(z)\cap\mathcal{F}$, $B_\delta(\z)\cap\mathcal{F}$, and $\mathcal{F}\setminus\left(B_\delta(z)\cup B_\delta(\z)\right)$. The integral over $\mathcal{F}\setminus\left(B_\delta(z)\cup B_\delta(\z)\right)$, away from the poles, vanishes at $(s_1,s_2)=(0,0)$. Similarly, in the integral over $B_\delta(z)$ (resp. $B_\delta(\z)$) we can plug in $s_2=0$ (resp. $s_1=0$). By construction of $\mathcal{F}$, the points $z$ and $\z$ lie in the interior of $\Gamma_z\mathcal{F}$, resp.~$\Gamma_\z\mathcal{F}$. For $\varrho\in\{z,\z\}$, we decompose
$$
B_\delta(\varrho) = \bigcup_{M\in\Gamma_\varrho}M\left(B_\delta(\varrho)\cap\mathcal{F}\right)
$$
to write
$$
-\int_{ B_\delta(\varrho)\cap\mathcal{F}}H_z(\tau)\overline{\xi_0 \left(|X_\varrho(\tau)|^{2s}\right)}G_\z(\tau)dudv = -\frac{1}{w_\varrho}\int_{ B_\delta(\varrho)}H_z(\tau)\overline{\xi_0 \left(|X_\varrho(\tau)|^{2s}\right)}G_\z(\tau)dudv.
$$
Now using
$$
\xi_0 \left(|X_\varrho(\tau)|^{2s}\right) =-4s\eta |X_\varrho(\tau)|^{2s-2}\frac{\overline{X_\varrho(\tau)}}{(\tau-\overline{\varrho})^2},
$$
 we have to compute 
\begin{equation}\label{res}
\text{Res}_{s=0}\left(\frac{4\eta}{w_\varrho}\int_{B_\delta(\varrho)}G_z(\tau)|X_\varrho(\tau)|^{2s-2}\frac{X_\varrho(\tau)}{(\overline{\tau}-\varrho)^2}H_\z(\tau)dudv\right)
\end{equation}
for $\varrho\in\{z,\z\}$. 
Plugging in the elliptic expansions \eqref{Hellxp} and \eqref{Gellxp} around $\varrho$, we obtain
\begin{multline*}
\frac{4\eta}{w_\varrho}\int_{B_\delta(\varrho)}H_z(\tau)|X_\varrho(\tau)|^{2s-2}\frac{X_\varrho(\tau)}{(\overline{\tau}-\varrho)^2}G_\z(\tau)dudv \\
=\frac{4\eta}{w_\varrho}\int_{B_\delta(\varrho)}\left(-\frac{ \delta_{z,\varrho}w_z \eta}{ \pi }X_\varrho(\tau)^{-1}+\sum_{n\geq 0}a_{z,\varrho}(n)X_\varrho(\tau)^n\right)\frac{|X_\varrho(\tau)|^{2s-2} X_\varrho(\tau)}{(\overline{\tau}-\varrho)^2(\tau-\overline{\varrho})^2}\\
\times\left(- \frac{w_\z}{2\pi }\log(\left|X_\varrho(\tau)\right|)+\sum_{n\geq 0}A^+_{\z,\varrho}(n)X_\varrho(\tau)^n  + \sum_{n>0}A_{\z,\varrho}^-(n)\overline{X_\varrho(\tau)}^{n}\right)dudv
\end{multline*}
We substitute $X_\varrho(\tau)=Re(\theta)$ and use $\frac{4\eta^2}{|\tau-\overline{\varrho}|^4}dudv = 2\pi R \,d\theta dR$ to obtain
\begin{align*}
\frac{2\pi}{\eta w_\varrho}&\int_0 ^\delta \int_0 ^1 \left(-\frac{\delta_{z,\varrho}w_z \eta}{ \pi }+\sum_{n\geq 0}a_{z,\varrho}(n)R^{n+1}e((n+1)\theta)\right)R^{2s-1}\\
&\qquad \qquad\times\left(- \frac{\delta_{\z,\varrho}w_\z}{2\pi }\log(R)+\sum_{n\geq 0}A^+_{\z,\varrho}(n)R^ne(n\theta)  + \sum_{n>0}A_{\z,\varrho}^-(n)R^ne(-n\theta)\right)d\theta dR \\
&=\int_0 ^\delta \left( \frac{\delta_{z,\varrho}\delta_{\z,\varrho} w_z w_\z}{\pi  w_\varrho}\log(R)- \frac{2\delta_{z,\varrho}w_z}{w_\varrho} A^+_{\z,\varrho}(0)+ \frac{2\pi}{\eta w_\varrho}\sum_{n\ge 0}A_{\z,\varrho}^-(n+1)a_{z,\varrho}(n)R^{2n+2}\right)R^{2s-1}dR\\
&=\frac{\delta_{z,\varrho}\delta_{\z,\varrho}w_z}{\pi }\int_0 ^\delta \log(R)R^{2s-1}dR -\delta_{z,\varrho} A^+_{\z,\varrho}(0)\frac{\delta^{2s}}{s}+\frac{2\pi}{\eta w_\varrho}\sum_{n\ge 0}A_{\z,\varrho}^-(n+1)a_{z,\varrho}(n)\frac{\delta^{2(n+s+1)}}{2(n+s+1)}.
\end{align*}
The last sum is analytic in $s=0$ and we have
$$
\int_0 ^\delta \log(R)R^{2s-1}dR = \frac{\delta^{2s}\log(\delta)}{2s}-\frac{\delta^{2s}}{4s^2}=-\frac{1}{4s^2}+O(1),
$$
so only the second term contributes to the residue in \eqref{res}, yielding the statement.
\end{proof}

\begin{proof}[Proof of Theorem \ref{rip}]
It follows from Lemmas \ref{ftrp} and \ref{hprod} that
$$
\left\langle f_{d}, f_{\delta}\right\rangle 
= \sum_{Q\in\mathscr{Q}_{d}/\SL_2(\Z)\atop \mathcal{Q}\in\mathscr{Q}_{\delta}/\SL_2(\Z)}\frac{1}{w_{z_Q}w_{z_\mathcal{Q}}}\left\langle H_{z_Q}, H_{z_\mathcal{Q}}\right\rangle = -\sum_{Q\in\mathscr{Q}_{d}/\SL_2(\Z)\atop \mathcal{Q}\in\mathscr{Q}_{\delta}/\SL_2(\Z)}\frac{1}{w_{z_Q}w_{z_\mathcal{Q}}}A_{z_\mathcal{Q},z_Q}^+(0).
$$
\begin{itemize}
	\item[(i)] Note that if two quadratic forms $Q$, $\mathcal{Q}$ have the same CM-point, then one has to be an integer multiple of the other. The factor has to be $\sqrt{\frac{\delta}{d}}$. So conversely, if $\frac{\delta}{d}$ is not a square, then we have
	$$
	A_{z_\mathcal{Q},z_Q}^+(0) = -\frac{1}{2\pi}\log\left(\left|J(z_Q)-J(z_\mathcal{Q})\right|^{\frac{1}{w_Qw_\mathcal{Q}}}\right)
	$$
	for any $Q\in\mathscr{Q}_{d}$ and $\mathcal{Q}\in\mathscr{Q}_{\delta}$ by the first case of \eqref{a0}.
	\item[(ii)] By the same argument as above, if neither $\frac{d}{3}$ nor $\frac{d}{4}$ is a square, then neither $\rho$ nor $i$ is a CM-point of any quadratic form of discriminant $d$. Thus by the first two cases of \eqref{a0}, we have for any $Q\neq \mathcal{Q}\in\mathscr{Q}_{d}$ 
	$$
	A_{z_\mathcal{Q},z_Q}^+(0) = -\frac{1}{2\pi}\log\left|J(z_Q)-J(z_\mathcal{Q})\right|
	$$
	and 
	$$
	A_{z_Q,z_Q}^+(0) = -\frac{1}{2\pi}\log\left|2\im(z_Q)J'(z_Q)\right|=-\frac{1}{2\pi}\log\left|\sqrt{|d|}\frac{J'(z_Q)}{Q(1,0)}\right|.
	$$
	\item[(iii)] This follows directly from the last two cases of $A_{z_Q,z_\mathcal{Q}}^+(0)$ given in \eqref{a0}.
\end{itemize}
\end{proof}

\end{document}